%%
%% Copyright 2007, 2008, 2009 Elsevier Ltd
%%
%% This file is part of the 'Elsarticle Bundle'.
%% ---------------------------------------------
%%
%% It may be distributed under the conditions of the LaTeX Project Public
%% License, either version 1.2 of this license or (at your option) any
%% later version.  The latest version of this license is in
%%    http://www.latex-project.org/lppl.txt
%% and version 1.2 or later is part of all distributions of LaTeX
%% version 1999/12/01 or later.
%%
%% The list of all files belonging to the 'Elsarticle Bundle' is
%% given in the file `manifest.txt'.
%%

%% Template article for Elsevier's document class `elsarticle'
%% with harvard style bibliographic references
%% SP 2008/03/01
%%
%%
%%
%% $Id: elsarticle-template-harv.tex 4 2009-10-24 08:22:58Z rishi $
%%
%%
\documentclass[preprint,12pt]{elsarticle}

%% Use the option review to obtain double line spacing
%% \documentclass[authoryear,preprint,review,12pt]{elsarticle}

%% Use the options 1p,twocolumn; 3p; 3p,twocolumn; 5p; or 5p,twocolumn
%% for a journal layout:
%% \documentclass[final,authoryear,1p,times]{elsarticle}
%% \documentclass[final,authoryear,1p,times,twocolumn]{elsarticle}
%% \documentclass[final,authoryear,3p,times]{elsarticle}
%% \documentclass[final,authoryear,3p,times,twocolumn]{elsarticle}
%% \documentclass[final,authoryear,5p,times]{elsarticle}
%% \documentclass[final,authoryear,5p,times,twocolumn]{elsarticle}

%% if you use PostScript figures in your article
%% use the graphics package for simple commands
%% \usepackage{graphics}
%% or use the graphicx package for more complicated commands
%% \usepackage{graphicx}
%% or use the epsfig package if you prefer to use the old commands
%% \usepackage{epsfig}

%% The amssymb package provides various useful mathematical symbols
\usepackage{amssymb}
\usepackage{amsfonts}
\usepackage{amsmath}
\usepackage{amsthm}

%% The amsthm package provides extended theorem environments
%% \usepackage{amsthm}

%%%%%%%%%%%%%%%%%%%%%%%%%%%%%%%%%%%%%%%%%%%%%%%%%
\newtheorem{Thm}{Theorem}
\newtheorem{Lem}[Thm]{Lemma}
\newtheorem{Prop}[Thm]{Proposition}
\newtheorem{Cor}[Thm]{Corollary}

\theoremstyle{definition}
\newtheorem{Def}[Thm]{Definition}

\theoremstyle{remark}
\newtheorem{Rem}[Thm]{Remark}

\numberwithin{equation}{section}

%%%%%%%%%%%%%%%%%%%%%%%%%%%%%%%%%%%%%%%%%%%%%%%%%%%
\def\Gc{{\mathcal{G}}}
\def\SRc{{\mathcal{SR}}}
\def\Rc{{\mathcal{R}}}

\def\RR{{\mathbb{R}}}
\def\ZZ{{\mathbb{Z}}}
\def\NN{{\mathbb{N}}}
\def\PP{{\mathbb{P}}}
\def\KK{{\mathbb{K}}}

\def\PP{{\mathbb{P}}}

\def\LL{{\mathbb{L}}}

\def\Spect{{\mathrm{Spect}}}
\def\dd{{\mathbf{d}}}
%%%%%%%%%%%%%%%%%%%%%%%%%%%%%%%%%%%%%%%%%%%%%%%%%

%% The lineno packages adds line numbers. Start line numbering with
%% \begin{linenumbers}, end it with \end{linenumbers}. Or switch it on
%% for the whole article with \linenumbers after \end{frontmatter}.
%% \usepackage{lineno}

%% natbib.sty is loaded by default. However, natbib options can be
%% provided with \biboptions{...} command. Following options are
%% valid:

%%   round  -  round parentheses are used (default)
%%   square -  square brackets are used   [option]
%%   curly  -  curly braces are used      {option}
%%   angle  -  angle brackets are used    <option>
%%   semicolon  -  multiple citations separated by semi-colon (default)
%%   colon  - same as semicolon, an earlier confusion
%%   comma  -  separated by comma
%%   authoryear - selects author-year citations (default)
%%   numbers-  selects numerical citations
%%   super  -  numerical citations as superscripts
%%   sort   -  sorts multiple citations according to order in ref. list
%%   sort&compress   -  like sort, but also compresses numerical citations
%%   compress - compresses without sorting
%%   longnamesfirst  -  makes first citation full author list
%%
%% \biboptions{longnamesfirst,comma}

% \biboptions{}

\journal{Journal of Algebra}

\begin{document}

\begin{frontmatter}

%% Title, authors and addresses

%% use the tnoteref command within \title for footnotes;
%% use the tnotetext command for the associated footnote;
%% use the fnref command within \author or \address for footnotes;
%% use the fntext command for the associated footnote;
%% use the corref command within \author for corresponding author footnotes;
%% use the cortext command for the associated footnote;
%% use the ead command for the email address,
%% and the form \ead[url] for the home page:
%%
%% \title{Title\tnoteref{label1}}
%% \tnotetext[label1]{}
%% \author{Name\corref{cor1}\fnref{label2}}
%% \ead{email address}
%% \ead[url]{home page}
%% \fntext[label2]{}
%% \cortext[cor1]{}
%% \address{Address\fnref{label3}}
%% \fntext[label3]{}

\title{On the total order of reducibility of a pencil of  algebraic plane curves}

%% use optional labels to link authors explicitly to addresses:
%% \author[label1,label2]{<author name>}
%% \address[label1]{<address>}
%% \address[label2]{<address>}

\author[INRIA]{L. Bus\'e}
\ead{Laurent.Buse@inria.fr}
\ead[url]{http://www-sop.inria.fr/members/Laurent.Buse/}
\author[Toulouse]{G. Ch\`eze}
% \cortext[cor1]{Corresponding author.}
\ead{guillaume.cheze@math.univ-toulouse.fr}
\ead[url]{http://www.math.univ-toulouse.fr/$\sim$cheze/}

\address[INRIA]{Projet Galaad, INRIA Sophia Antipolis - M\'editerran\'ee\\
             2004 route des Lucioles, B.P. 93\\
             06902 Sophia Antipolis, France}
\address[Toulouse]{ Institut de Math\'ematiques de Toulouse\\
Universit\'e Paul Sabatier Toulouse 3 \\
MIP B\^at 1R3,\\
 31 062 Toulouse cedex 9, France}

\begin{abstract}
In this paper, the problem of bounding the number of reducible curves in a pencil of algebraic plane curves is addressed. Unlike most of the previous related works, each reducible curve of the pencil is here counted with its appropriate multiplicity. It is proved that this number of reducible curves, counted with multiplicity, is bounded by $d^2-1$ where $d$ is the degree of the pencil. Then, a sharper bound is given by taking into account the Newton's polygon of the pencil. 
\end{abstract}

\begin{keyword}
Pencil of algebraic curves \sep spectrum of a rational function \sep  algebraic de Rham's cohomology \sep Newton's polygon
%% keywords here, in the form: keyword \sep keyword

%% MSC codes here, in the form: \MSC code \sep code
%% or \MSC[2008] code \sep code (2000 is the default)
\end{keyword}

\end{frontmatter}

% \linenumbers

%% main text
%%%%%%%%%%%%%%%%%%%%%%%%%%%%%%%%%%%%%%%%%%%%%%%%%%%%%%%%%%%%%%%%%%%%%%%
\section*{Introduction}

Given a pencil of algebraic plane curves such that a general element is irreducible, the purpose of this paper is to give a sharp upper bound for the number of reducible curves in this pencil. This question has been widely studied in the literature, but never, as far as we know, by counting the reducible factors with their  multiplicities.

\medskip

Let $r(X,Y)=f(X,Y)/g(X,Y)$ be a rational function in $\KK(X,Y)$, where $\KK$ is an algebraically closed field. It is commonly said to be non-composite if it cannot be written $r=u \circ h$ where $h(X,Y) \in \KK(X,Y)$ and $u \in\KK(T)$ such that $\deg(u) \geq 2$ (recall that the degree of a rational function is the maximum of the degrees of its numerator and denominator after reduction). If $d=\max (\deg(f),\deg(g))$, we define
$$ f^\sharp(X,Y,Z)=Z^d f\left(\frac{X}{Z}, \frac{Y}{Z} \right), \ \ g^\sharp(X,Y,Z)=Z^d g\left(\frac{X}{Z}, \frac{Y}{Z} \right)$$
that are two homogeneous polynomials of the same degree $d$ in $\KK[X,Y,Z]$.
The set 
$$\sigma(f,g) =\{(\mu:\lambda) \in \PP^1_{\KK} \mid \mu f^\sharp + \lambda g^\sharp \textrm{ is reducible in } \KK[X,Y,Z] \} \subset \PP^1_{\KK}$$ 
is the spectrum of $r$ and a classical theorem of Bertini and Krull implies that it is finite if $r$ is non-composite. Actually, $\sigma(f,g)$ is finite if and only if $r$ is non-composite and if and only if the pencil of projective algebraic plane curves $\mu f^\sharp + \lambda g^\sharp =0$, $(\mu:\lambda) \in \PP^1_{\KK}$, has an irreducible general element (see for instance \cite[Chapitre 2, Th\'eor\`eme 3.4.6]{J79} and \cite[Theorem 2.2]{Bo} for detailed proofs). Notice that the study of $\sigma(f,g)$ is trivial if $d=1$. Therefore, throughout this paper we will always assume that $d\geq 2$.

Given $(\mu:\lambda) \in \sigma(f,g)$, a complete factorization of the polynomial \mbox{$\mu f^\sharp + \lambda g^\sharp$} is of the form
\begin{equation}
\mu f^\sharp + \lambda g^\sharp=\prod_{i=1}^{n(\mu:\lambda)} P_{(\mu:\lambda),i}^{e_{(\mu:\lambda),i}}	
\tag{$\star$}
\end{equation}
where each polynomial $P_{(\mu:\lambda),i}$ is irreducible and homogeneous in $\KK[X,Y,Z].$ If $\sigma(f,g)$ is finite the 
\emph{total order of reducibility}\footnote{This terminology is taken from \cite{St}.} $\rho(f,g)$ of $r$ is then defined by 
\begin{equation*}
\rho(f,g)=\sum_{(\mu:\lambda)\in \PP^1_\KK} \big( n(\mu:\lambda)-1 \big).	
% \tag{$\star \star$}
\end{equation*}
Observe that the above sum is finite because $n(\mu:\lambda) \neq 1$ implies that \mbox{$(\mu:\lambda) \in \sigma(f,g)$}. 

\medskip

It is known that $\rho(f,g)$ is bounded above by $d^2-1$ where $d$ stands for the degree of $r$. As far as we know, the first related result has been given by Poincar\'e \cite{Poin}. He showed that 
$$ |\sigma(f,g)| \leq (2d-1)^2+2d+2.$$
This bound was improved only very recently by Ruppert \cite{Ru1} who proves that $|\sigma(f,g)|$ is bounded by $d^2-1$. This result was obtained as a byproduct of a very interesting technique developed by the author to decide the reducibility of an algebraic plane curve. 
Later on, Stein studied in \cite{St} a less general question but gave a stronger result: he proves that if $g=1$ then $\rho(f,1)\leq d-1$. Its approach, based on the study of the multiplicative group of all the divisors of the reducible curves in the pencil, is entirely different from that of Ruppert. Then, Stein's bound was improved in \cite{Kal} and after that several papers \cite{Lo,Vi,AHS,Bo} developed techniques with similar flavors to deal with the general case $\rho(f,g)$.  All of them obtained the bound $\rho(f,g)\leq d^2-1$ but also provide some various extensions: In \cite{Lo} the bound is proved in arbitrary characteristic, in \cite{Bo} it is shown that a direct generalization of Stein's result yields the bound $\rho(f,g)\leq d^2+d-1$, in \cite{Vi} the result is generalized to a very general ground variety and finally, in \cite{AHS} the authors were interested in a total reducibility order over a field $\KK$ that is not necessarily algebraically closed. Incidentally, point out the paper \cite{Yuz} that deals with completely reducible curves in a pencil, a topic which is closely related. 
 
\medskip

The aim of this paper is to study the total order of reducibility by counting  the multiplicities. More precisely, for each $(\mu:\lambda) \in \sigma(f,g)$  define 
$$m(\mu:\lambda):=\sum_{i=1}^{n(\mu:\lambda)} e_{(\mu:\lambda),i}$$ 
from the factorization $(\star)$.
This number is the number of factors of $\mu f^\sharp+ \lambda g^\sharp$ where the multiplicities of the  factors are counted. In particular, it is clear that $n(\mu:\lambda) \leq m(\mu:\lambda)$. 
We define the total order of reducibility with multiplicities of the rational function $r$ as the integer
$$m(f,g)=\sum_{(\mu:\lambda) \in \PP^1_\KK} \big(m(\mu:\lambda)-1 \big).$$
Obviously, it always holds that $0\leq \rho(f,g) \leq m(f,g)$. Moreover, notice that unlike $\rho(f,g)$,  $m(f,g)$ takes into account those curves in the pencil that are geometrically irreducible but scheme-theoretically non-reduced. However, it is proved in \cite[General Mixed Primset Theorem, p 74]{AHS} that the number of such curves is at most 4 in our context; we will come back to this point in Section \ref{mainresult}.

\medskip

The first main result of this paper is that the upper bound $d^2-1$ for $\rho(f,g)$ is also valid for $m(f,g)$. This is the content of Section \ref{mainresult} where it is assumed that the characteristic of $\KK$ is zero. Our method, which is inspired by \cite{Ru1}, is elementary compared to the previously mentioned papers. Roughly speaking, we will transform the pencil of curves into a pencil of matrices and obtain in this way the claimed bound as a consequence of rank computations of some matrices that we will study in Section  \ref{ruppert}. In this way, the known inequality $\rho(f,g)\leq d^2-1$ is easily obtained.  Moreover, we will actually not only bound $m(f,g)$ by  $d^2-1$, but a bigger quantity that takes into account the multiple factors of the reducible elements in the pencil. Notice that we will also show that the same bound holds in the case where  $r=f/g$ is a rational function in an arbitrary number of variables via a classical use of Bertini's Theorem at the end of Section \ref{mainresult}.

The second main result of this paper, given in Section \ref{newton}, is a refined upper bound for $m(f,g)$ which is obtained by considering the Newton's polygons of the polynomials $f$ and $g$. This result also gives a bound for the total order of reducibility $\rho(f,g)$ which is new and sharper. Notice that in this section the characteristic of $\KK$ will be assumed to be $0$ or $>d(d-1)$ where $d$ denotes the degree of $r=f/g$.

\subsection*{Notations}
Throughout this paper, $\KK$ stands for an algebraically closed field of cha\-ra\-cte\-ri\-stic $p$. 
Given a polynomial $f$, $\deg(f)$ denotes its total degree and 
$\partial_{X}f$ (resp.~$\partial_Y f$) denotes the partial derivative of $f$ with respect to the variable $X$ (resp.~to $Y$). 
Also, for any integer $n$ the notation $\KK[X,Y]_{\leq n}$ stands for the set of all the polynomials in $\KK[X,Y]$ with total degree less or equal to $n$; the notation $\KK[X,Y,Z]_n$ stands for the set of all homogeneous polynomials of degree $n$ in $\KK[X,Y,Z]$.

\section{Ruppert's linear map}\label{ruppert}

In the paper \cite{Ru1}, Ruppert introduced an original technique to decide whether a plane algebraic curve is reducible. Its formulation relies on the computation of the first de Rham's cohomology group of the complementary of the plane curve by means of linear algebra methods. Later, Gao followed this approach to obtain an algorithm for the factorization of a bivariate polynomial \cite{Ga}. 

\medskip

From now on, we will always assume in this section that the characteristic of the algebraically closed field $\KK$ is $p=0$.

\medskip

 For $\nu$ a positive integer and $f(X,Y) \in \KK[X,Y]$ a polynomial of degree $d \leq \nu$, define the $\KK$-linear map
\begin{eqnarray*}
\Gc_\nu(f): \KK[X,Y]_{\leq \nu-1} \times \KK[X,Y]_{\leq \nu-1}  & \longrightarrow & \KK[X,Y]_{\leq \nu+d-2}\\
(G,H) & \mapsto & 
f^2\left(
\partial_Y\left(\frac{G}{f}\right)-\partial_X\left(\frac{H}{f}\right)
\right) \\
 & & = \left|
 \begin{array}{cc}
 	f & \partial_Y f \\
 	G & \partial_Y G
 \end{array}
 \right| - 
 \left|
 \begin{array}{cc}
 	f & \partial_X f \\
 	H & \partial_X H
 \end{array}
 \right|.
\end{eqnarray*}
Let $f_1,\ldots,f_r$ be the irreducible factors of $f$. If $\gcd(f,\partial_X f)$ is a nonzero constant in $\KK$ then it is proved in \cite{Ga} that $\ker \Gc_d(f)$ is a $\KK$-vector space of dimension $r$ and that the set 
\begin{equation}\label{eq:gao_basis}
\left\{ \Big(\dfrac{f}{f_i}\partial_X f_i,\dfrac{f}{f_i}\partial_Y f_i\Big) \mid i=1,\dots,r\right\}
\end{equation}
is a basis of this kernel.
This result provides an explicit description of the kernel of the linear map $\Gc_d(f)$ if the polynomial $f(X,Y)$ does not have any square factor. In order to investigate this kernel in the general case, that is to say for an arbitrary polynomial $f \in \KK[X,Y]$ and for an arbitrary integer $\nu\geq \deg(f)$, we interpret it in terms of algebraic de Rham cohomology.

\medskip

Let $\nu$ be a positive integer and $0\neq f(X,Y) \in \KK[X,Y]$ be a polynomial of degree $d \leq \nu$. Assume that $f=f_1^{e_1}\cdots f_r^{e_r}$ is a factorization of $f$ where each polynomial $f_i$ is irreducible and denote by $\mathcal{C}$ the algebraic curve defined by the equation $f=0$. The first algebraic de Rham cohomology $H^1(\mathbb{A}^2_\KK \setminus \mathcal{C})$ is  the quotient of the closed 1-differential forms $w \in \Omega_{\KK[X,Y]_f / \KK}$ of $\KK[X,Y]_f$ over $\KK$ by the exact 1-forms. 

By definition of $\mathcal{G}_\nu(f)$, a couple $(G,H) \in \KK[X,Y]_{\leq \nu-1} \times \KK[X,Y]_{\leq \nu-1}$ belongs to the kernel of $\mathcal{G}_\nu(f)$ if and only if the 1-form $\frac{1}{f}(G\dd X+H\dd Y)$ is closed. Therefore, the kernel of $\mathcal{G}_\nu(f)$ is in correspondence with the closed 1-differential forms $w \in \Omega_{\KK[X,Y]_f / \KK}$ that can be written $w=\frac{1}{f}(G\dd X+H\dd Y)$ for some polynomials $G$ and $H$ of degree less or equal to $\nu-1$. As a consequence of Ruppert's results in \cite{Ru1} (see also \cite[Theorem 8.3]{Scheib}), these particular closed 1-forms are sufficient to give a representation of any element in $H^1(\mathbb{A}^2_\KK \setminus \mathcal{C})$, that is to say that the canonical map 
$$\ker \mathcal{G}_\nu(f) \rightarrow H^1(\mathbb{A}^2_\KK \setminus \mathcal{C})$$ 
is surjective. Actually, the closed 1-forms $\frac{\dd f_1}{f_1}, \ldots, \frac{\dd f_r}{f_r}$
are known to form a basis of 
$H^1(\mathbb{A}^2_\KK \setminus \mathcal{C})$ (see loc.~cit.~or for instance \cite[Chapter 6]{Dimca}). 
It follows that $$H^1(\mathbb{A}^2_\KK \setminus \mathcal{C}) \simeq \ker \mathcal{G}_\nu(f) / B_\nu$$ where $B_\nu$ is the set of 1-forms in $\ker \mathcal{G}_\nu(f)$ that are exact. Basically, the elements in $B_\nu$ are of the form $\dd\left(\frac{P}{f^s}\right)$ for some $P \in \KK[X,Y]$ and $s\in \NN$. However, we claim that the following equality holds
\begin{multline}\label{B}
B_\nu=\left\{ w=\frac{1}{f}(G\dd X+H\dd Y), (G,H) \in \KK[X,Y]_{\leq \nu-1} \times \KK[X,Y]_{\leq \nu-1} \right. \\
\left. \textrm{ such that } \exists P \in K[X,Y]_{\leq \nu} \textrm{ with } \dd \left(\frac{P}{f}\right)=w \right\}.
\end{multline}
It is a consequence of the following technical results.

\begin{Lem}\label{irrdiv}
	Let $p,q$ be polynomials in $\KK[X,Y]$ such that $p$ divides $q\dd p$. Then each irreducible factor of $p$ divides $q$.
\end{Lem}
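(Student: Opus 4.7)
The hypothesis ``$p$ divides $q\,\dd p$'' should be read componentwise: $p\mid q\,\partial_X p$ and $p\mid q\,\partial_Y p$. So the plan is to fix an irreducible factor $p_i$ of $p$ and show that it must divide $q$ by an elementary divisibility argument using the logarithmic derivative of the factorization.

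Concretely, write $p=\prod_{j=1}^r p_j^{e_j}$ with the $p_j$ pairwise coprime and irreducible. A direct computation of the partial derivative gives
\begin{equation*}
\partial_X p = p_i^{e_i-1}\Bigl(e_i\,\partial_X p_i\prod_{k\neq i}p_k^{e_k}\Bigr) + p_i^{e_i}\cdot S
\end{equation*}
for some polynomial $S\in\KK[X,Y]$. Since $p_i^{e_i}\mid p\mid q\,\partial_X p$, dividing by $p_i^{e_i-1}$ shows that $p_i$ divides $q\cdot e_i\,\partial_X p_i\prod_{k\neq i}p_k^{e_k}$. Now $p_i$ is irreducible and coprime to each $p_k$ for $k\neq i$, and by the standing assumption $p=0$ on the characteristic we have $e_i\neq 0$ in $\KK$. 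Hence $p_i\mid q\,\partial_X p_i$, and the same argument yields $p_i\mid q\,\partial_Y p_i$.

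To finish, suppose for a contradiction that $p_i\nmid q$. Since $p_i$ is irreducible, it is then coprime to $q$, which forces $p_i\mid \partial_X p_i$ and $p_i\mid \partial_Y p_i$. But $\deg \partial_X p_i<\deg p_i$ and $\deg\partial_Y p_i<\deg p_i$, so both partial derivatives must vanish identically. In characteristic zero, this implies $p_i$ is a constant polynomial, contradicting the irreducibility of $p_i$. Therefore $p_i\mid q$, as claimed.

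There is no real obstacle here: the only delicate point is keeping track of the factor $e_i$, which is where the hypothesis on the characteristic enters (in positive characteristic one would need $p\nmid e_i$, which is why the authors have restricted to $p=0$ throughout this section). The rest is just carefully separating the contribution of $p_i$ from the logarithmic derivative of $p$.
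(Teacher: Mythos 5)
Your proof is correct and follows essentially the same route as the paper's: expand the derivative of $p$ via its factorization, isolate the term involving $\partial p_i$, observe that $p_i^{e_i}$ divides the remaining terms so it must divide the isolated one, cancel $p_i^{e_i-1}$, and finish by coprimality. The paper phrases this compactly via the logarithmic-derivative identity $\dd p/p=\sum_j e_j\,\dd p_j/p_j$, while you spell out the Leibniz expansion componentwise and make explicit the final step that $p_i\nmid\partial_X p_i$, $p_i\nmid\partial_Y p_i$ cannot both fail in characteristic zero; both amount to the same argument.
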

\begin{proof} Let $p_1,\ldots,p_r$ be distinct irreducible factors of $p$ such that $p=\prod_{i=1}^rp_i^{e_i}$. Then the equality
$$\frac{\dd p}{p}=\sum_{i=1}^r e_i\frac{\dd p_i}{p_i}$$
together with our hypothesis 
imply that $p_i^{e_i}$ divides $q\sum_{j=1}^r e_j\frac{p}{p_j}\dd p_j$. We deduce that $p_i^{e_i}$ must divide $q\frac{p}{p_i} \dd p_i$ and therefore that $p_i$ divides $q$.
\end{proof}

\begin{Lem} Let $f\in \KK[X,Y]$ of degree $d$ and $G,H \in \KK[X,Y]$ of degree $\leq \nu-1$ with $\nu\geq d$. If $P \in \KK[X,Y]$ and $s\in \NN$ are such that 
	$$\dd\left(\frac{P}{f^s} \right)=\frac{1}{f}(G\dd X+H\dd Y)$$
	and $f$ does not divide $P$ if $s\geq 1$, then
	either $s=1$ and $\deg(P)\leq \nu$ or either $s=0$ and $\deg(P)\leq \nu-d$.
\end{Lem}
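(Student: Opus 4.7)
The plan is to clear denominators by multiplying the hypothesis by $f^{s+1}$, yielding the polynomial identity
\begin{equation*}
f\,\dd P - sP\,\dd f \;=\; f^s(G\,\dd X + H\,\dd Y).
\end{equation*}
When $s=0$ this reads $f\partial_X P = G$ and $f\partial_Y P = H$; since $\KK$ has characteristic zero, at least one of $\partial_X P$, $\partial_Y P$ has degree exactly $\deg(P)-1$ when $P$ is nonconstant, whence $d+\deg(P)-1 \leq \nu-1$, i.e.\ $\deg(P)\leq \nu-d$ (the constant case is trivial since $\nu\geq d$).

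For $s\geq 1$ I must show $s=1$ and $\deg(P)\leq \nu$. To rule out $s\geq 2$, I argue irreducible-factor by irreducible-factor that $f$ divides $P$, contradicting the hypothesis. Fix an irreducible factor $p=p_i$ of $f$ with multiplicity $e=e_i$, and write $f=p^e h$ with $\gcd(p,h)=1$ and $P=p^a Q$ with $\gcd(p,Q)=1$; the case $a \geq se$ gives $v_{p_i}(P)\geq e$ automatically, so assume $a<se$. Setting $k=se-a>0$ and expanding with the product rule gives
\[
\dd\!\left(\frac{P}{f^s}\right) \;=\; \frac{\dd Q}{p^k h^s} \;-\; \frac{k\,Q\,\dd p}{p^{k+1}h^s} \;-\; \frac{s\,Q\,\dd h}{p^k h^{s+1}}.
\]
The middle summand has pole of order exactly $k+1$ along $p$: indeed $\gcd(p,kQh)=1$, and in characteristic zero the partials of the irreducible polynomial $p$ cannot both be divisible by $p$ (the fact underlying Lemma~\ref{irrdiv}), so $v_p(\dd p)=0$. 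The other two summands have strictly smaller pole order. On the other hand, the right-hand side $(G\,\dd X+H\,\dd Y)/f$ has pole order at most $e$ along $p$, forcing $k+1\leq e$, i.e.\ $a\geq (s-1)e+1$. For $s\geq 2$ this yields $v_{p_i}(P)\geq e_i$ for every $i$, hence $f\mid P$, a contradiction. Therefore $s=1$.

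To bound $\deg(P)$ when $s=1$, the identity becomes $f\partial_X P - P\partial_X f = fG$ and $f\partial_Y P - P\partial_Y f = fH$. Writing $k=\deg(P)$ and letting $f_d, P_k$ denote the leading homogeneous components, if $k>\nu$ then both right-hand sides have degree at most $d+\nu-1<d+k-1$, so the degree-$(d+k-1)$ parts of the left-hand sides must vanish:
\[
f_d\,\partial_X P_k - P_k\,\partial_X f_d \;=\; 0 \;=\; f_d\,\partial_Y P_k - P_k\,\partial_Y f_d.
\]
These say $\partial_X(P_k/f_d)=\partial_Y(P_k/f_d)=0$, so in characteristic zero $P_k/f_d$ is a nonzero constant, forcing $k=d\leq\nu$ and contradicting $k>\nu$.

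The principal obstacle is the pole-order computation ruling out $s\geq 2$: one must verify that the three summands above cannot conspire to cancel, and this rests on the characteristic-zero fact $v_p(\dd p)=0$ for $p$ irreducible---the same phenomenon powering Lemma~\ref{irrdiv}.
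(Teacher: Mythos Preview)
Your proof is correct. The cases $s=0$ and $s=1$ match the paper's argument essentially verbatim. For $s\geq 2$ you take a genuinely different route: fixing an irreducible factor $p$ of $f$ with multiplicity $e$ and writing $P=p^aQ$ with $\gcd(p,Q)=1$, you compute the $p$-adic pole order of $\dd(P/f^s)$ directly and show it equals $se-a+1$ whenever $a<se$, which forces $se-a+1\leq e$ since the right-hand side has pole order at most $e$ along $p$; for $s\geq 2$ this gives $a\geq (s-1)e+1\geq e$ for every irreducible factor, hence $f\mid P$. The paper instead argues globally: from $f\mid sP\,\dd f$ it invokes Lemma~\ref{irrdiv} once to get each $f_i\mid P$, then sets $Q=\gcd(f,P)=\prod f_i^{\mu_i}$ with $1\leq\mu_i\leq e_i$, manipulates the divisibility $f^{s-1}\mid \dd P - s\sum e_i(P/f_i)\dd f_i$ to extract $f_i\mid (P/Q)\,\dd f_i$, and applies Lemma~\ref{irrdiv} a second time to force $\mu_i=e_i$. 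Your local valuation computation is more transparent and sidesteps the two-pass GCD bookkeeping; the paper's approach keeps the role of Lemma~\ref{irrdiv} front and center. Both ultimately rest on the same characteristic-zero fact that an irreducible $p$ cannot divide both $\partial_X p$ and $\partial_Y p$.
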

\begin{proof} This proof is inspired by \cite[Lemma 8.10]{Scheib}. Since
	$$\dd\left(\frac{P}{f^s} \right)=\frac{f\dd P-sP\dd f}{f^{s+1}}$$
we have
\begin{equation}\label{eq:Sch}
f\dd P-sP\dd f= f^s(G\dd X+H\dd Y).	
\end{equation}
Assume that $s\geq 2$ and denote by $f=\prod_{i=1}^r f_i^{e_i}$ an irreducible factorization of $f$. Equation \eqref{eq:Sch} implies that $f$ divides $P\dd f$ and therefore, by Lemma \ref{irrdiv}, that $f_i$ divides $P$ for all $i=1,\ldots,r$. Furthermore, since 
$$\frac{\dd f}{f}=\sum_{i=1}^r e_i\frac{\dd f_i}{f_i}$$ we get
$$f\dd P-sP\dd f=f\dd P - sPf\sum_{i=1}^r e_i\frac{\dd f_i}{f_i}=f(\dd P-s\sum_{i=1}^r e_i\frac{P}{f_i}\dd f_i).$$
But $f^s$ divides $f\dd P-sP\dd f$ by \eqref{eq:Sch}, so we deduce that
$$ f^{s-1} \, | \,  \dd P-s\sum_{i=1}^r e_i\frac{P}{f_i}\dd f_i.$$
Define $Q:=\gcd(f,P)=\prod_{i=1}^r f_i^{\mu_i}$ with $1\leq \mu_i\leq e_i$ for all $i=1,\ldots,r$ and set $R:=P/Q$. 
We obtain that $f^{s-1}$ divides
$$Q\dd R + R \dd Q -s\sum_{i=1}^r e_i\frac{P}{f_i}\dd f_i=Q\dd R + \sum_{i=1}^r (\mu_i - s e_i) R \frac{Q}{f_i}\dd f_i $$
since 
$$\frac{\dd Q}{Q}=\sum_{i=1}^r \mu_i\frac{\dd f_i}{f_i}.$$
As $s\geq 2$, $\mu_i-se_i<0$ for all $i$ and hence $f_i^{\mu_i}$ divides $R \frac{Q}{f_i}\dd f_i$. It follows that $f_i$ divides $R\dd f_i$ and therefore that $f_i$ divides $R$ by Lemma \ref{irrdiv}. But then $f_i^{\mu_i+1}$ divides $P$ which implies that $\mu_i=e_i$ for all $i$. Therefore, we conclude that if $s\geq 2$ then necessarily $f$ divides $P$: a contradiction with our hypotheses. So we must have $0\leq s\leq 1$.

Suppose that $s=0$. Then 
$$G\dd X + H\dd Y=f\dd P=f\partial_X P\dd X +f\partial_Y P\dd Y  $$
 and hence $\deg(P)\leq \nu - d$.

Now, assume that $s=1$.  We have
$$ fG \dd X + fH \dd Y = f\dd P - P\dd f = (f\partial_XP - P \partial_Xf)\dd X +(f\partial_YP - P \partial_Yf)\dd Y.$$
Denote by $\delta$ the degree of $P$ and by $P_\delta$, resp.~$f_d$, the homogeneous part of highest degree of $P$, resp.~$f$. 
If $f_d\partial_XP_\delta - P_\delta \partial_Xf_d \neq 0$ or $f_d\partial_YP_\delta - P_\delta \partial_Yf_d \neq 0$ then necessarily $\delta\leq \nu$ since $\deg(fG)\leq \nu + d -1$ and $\deg(fH)\leq \nu + d -1$. Otherwise, we obtain that 
$$\dd\left(\frac{P_\delta}{f_d}\right) = \frac{f_d\dd P_\delta-P_\delta\dd f_d}{(f_d)^2}=0$$
and hence that $\delta=d\leq \nu$.
\end{proof}

We are now ready to compute the dimension of the kernel of the $\KK$-linear map $\mathcal{G}_\nu(f)$ for all $\nu\geq d$.

\begin{Prop}\label{dimkerG} 
Let $f(X,Y) \in \KK[X,Y]$ of degree $d$ such that $f=f_1^{e_1}\cdots f_r^{e_r}$ is a factorization of $f$ where each polynomial $f_i$ is irreducible of degree $d_i$. Then, for all $\nu\geq d$ we have
$$\dim_\KK \ker \mathcal{G}_\nu (f) = r-1+\binom{2+\nu-d+\sum_{i=1}^r d_i(e_i-1)}{2}.$$
\end{Prop}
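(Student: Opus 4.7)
My plan is to use the isomorphism $\ker \Gc_\nu(f)/B_\nu \simeq H^1(\mathbb{A}^2_\KK \setminus \mathcal{C})$ established in the preceding discussion together with the explicit description of $B_\nu$ given in \eqref{B}. Since the classes of $\dd f_1/f_1,\ldots,\dd f_r/f_r$ form a basis of $H^1(\mathbb{A}^2_\KK \setminus \mathcal{C})$, and thus this space has dimension $r$, the theorem reduces to the assertion
$$\dim_\KK B_\nu = \binom{2 + \nu - d + \sum_{i=1}^r d_i(e_i-1)}{2} - 1.$$

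To this end, I would consider the $\KK$-linear map $\phi\colon \KK[X,Y]_{\leq \nu} \to \Omega^1_{\KK[X,Y]_f/\KK}$ defined by $\phi(P) = \dd(P/f)$. Its kernel consists of those $P$ with $P/f$ constant, i.e.\ the one-dimensional subspace $\KK\cdot f$. By \eqref{B} together with the second lemma above, $B_\nu$ is exactly the image of $\phi$ restricted to those $P\in\KK[X,Y]_{\leq \nu}$ for which $\dd(P/f)$ admits a representation $(G\,\dd X + H\,\dd Y)/f$ with $G,H\in\KK[X,Y]_{\leq \nu-1}$. Expanding $\dd(P/f) = (f\,\dd P - P\,\dd f)/f^2$, this representability is equivalent to the two polynomial divisibility conditions $f\mid P\,\partial_X f$ and $f\mid P\,\partial_Y f$.

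The core step is to show that these conditions are equivalent to $\tilde f\mid P$, where $\tilde f := \prod_{i=1}^r f_i$ is the radical of $f$. The reverse direction is direct: writing $\partial_X f = (f/\tilde f)\sum_i e_i (\tilde f/f_i)\,\partial_X f_i$ shows that $f$ divides $\tilde f Q\,\partial_X f$ for every $Q$. For the forward direction, one fixes $i$ and works modulo $f_i^{e_i}$: each summand of $\partial_X f$ indexed by $j\neq i$ is divisible by $f_i^{e_i}$, while the $i$-th summand equals $e_i f_i^{e_i-1}\bigl(\prod_{k\neq i} f_k^{e_k}\bigr)\,\partial_X f_i$. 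Using $\gcd(f_i,\prod_{k\neq i}f_k) = 1$ and $e_i\neq 0$ in characteristic zero, the condition $f_i^{e_i}\mid P\,\partial_X f$ reduces to $f_i\mid P\,\partial_X f_i$, and symmetrically for $Y$. Since an irreducible polynomial of positive degree in characteristic zero cannot have both of its partial derivatives divisible by itself (that would force both partials to vanish, hence $f_i$ to be constant), at least one of $\partial_X f_i,\partial_Y f_i$ is coprime to $f_i$, so $f_i\mid P$ and therefore $\tilde f\mid P$.

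Consequently, the valid $P$'s form the subspace $\tilde f\cdot\KK[X,Y]_{\leq \nu - \deg\tilde f}$ of dimension $\binom{2+\nu-\deg\tilde f}{2}$; since $\deg\tilde f = \sum_i d_i = d - \sum_i d_i(e_i-1)$, this is the desired binomial coefficient. Writing $P=\tilde f Q$ and $f' = f/\tilde f$, a short expansion $f\cdot\dd(P/f) = \tilde f\,\dd Q - \tilde f Q\,(\dd f'/f')$ together with elementary degree bookkeeping confirms $\deg G,\deg H\leq \nu-1$. Subtracting $1$ for $\ker\phi$ gives $\dim B_\nu$, and adding $r$ from $H^1$ yields the claimed formula. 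The main obstacle I foresee is the divisibility equivalence $\{f\mid P\,\dd f\}\Leftrightarrow\{\tilde f\mid P\}$, where the characteristic-zero hypothesis is essential and some care is needed for irreducible factors depending on a single variable.
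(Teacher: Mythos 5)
Your proof is correct and follows essentially the same route as the paper: both reduce the claim to computing $\dim_\KK B_\nu$, parametrize $B_\nu$ by the numerators $P\in\KK[X,Y]_{\leq\nu}$, show that the admissible $P$ are exactly the multiples of the radical $\tilde f=\prod_i f_i$, and subtract $1$ for the kernel $\KK\cdot f$. The paper reaches the characterization $\tilde f\mid P$ via the chain $\gcd(f,\partial_Xf)$, $\gcd(f,\partial_Yf)$, $\gcd(f,\partial_Xf,\partial_Yf)$, while you argue directly modulo each $f_i^{e_i}$; this is only a cosmetic difference.
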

\begin{proof} From the above discussion on the interpretation of $\ker \mathcal{G}_\nu(f)$ in terms of 1-differential forms, we know that
	$$\dim_\KK \ker \mathcal{G}_\nu(f) = \dim_\KK H^1(\mathbb{A}^2_\KK \setminus \mathcal{C}) + \dim_\KK B_\nu$$
	where $B_\nu$ is defined by \eqref{B}. Since we also know that $\dim_\KK H^1(\mathbb{A}^2_\KK \setminus \mathcal{C})=r$, it remains to compute the dimension of $B_\nu$.
For that purpose, observe that the condition $\dd \left(\frac{P}{f}\right)=w$ in the definition of $B_\nu$ is equivalent to the system of equations
$$\begin{cases}
		f\partial_X P - P\partial_X f - Gf = 0 \\
		f\partial_Y P - P\partial_Y f - Hf = 0
\end{cases}$$
with the constraints $\deg(G)\leq \nu-1$, $\deg(H)\leq \nu-1$ and $\deg(P)\leq \nu$. 

Denote by $L_\nu$ the vector space of those triples $(G,H,P)$ solution of this system. The canonical projection $(G,H,P) \mapsto (G,H)$ sends $L_\nu$ to $B_\nu$. Moreover, the kernel of this projection are the triples $(0,0,P)$ satisfying the condition $\dd \left(\frac{P}{f}\right)=0$ which implies that $P$ is equal to $f$ up to multiplication by an element in $\KK$. Therefore, $\dim_\KK B_\nu=\dim_\KK L_\nu - 1$ and we are left with the computation of the dimension of $L_\nu$.

 The first equation defining $L_\nu$, that can be rewritten as
$f(\partial_X P -G)=P\partial_X f$, implies that $P$ must be of the form 
$$P=Q_1\frac{f}{\gcd(f,\partial_X f)}$$ where $Q_1$ is a polynomial of degree less or equal to $\nu-d+\deg \gcd(f,\partial_X f)$. Moreover, any such polynomial $P$  provides a couple $(P,G)$ that is solution of the above equation -- once $P$ is fixed then so does for $G$. A similar reasoning with the second defining equation of $L_\nu$ shows that its solutions are in correspondence with the polynomials $P$ of the form $Q_2f/\gcd(f,\partial_Y f)$ where  $Q_2$ is any polynomial of degree less or equal to $\nu-d+\deg \gcd(f,\partial_Y f)$. 

Now, to obtain the common solutions of the two defining equations of $L_\nu$ we have to solve the equation 
$$Q_2\gcd(f,\partial_X f)=Q_1\gcd(f,\partial_Y f).$$
But again, with similar arguments and using the fact that
$$\gcd\left(\gcd(f,\partial_X f),\gcd(f,\partial_Y f)\right)=\gcd(f,\partial_X f,\partial_Y f)$$
we get that  
$$Q_1=Q\frac{\gcd(f,\partial_X f)}{\gcd(f,\partial_X f,\partial_Y f)}, \ \ Q_2=Q\frac{\gcd(f,\partial_Y f)}{\gcd(f,\partial_X f,\partial_Y f)}$$
where $Q$ is any polynomial in $\KK[X,Y]$ of degree less or equal to
\begin{equation}\label{degQ}
	\nu-d+\deg \gcd(f,\partial_X f,\partial_Y f) = \nu-d+ \sum_{i=1}^r d_i(e_i-1).	
\end{equation}
Therefore, we deduce that the dimension of $L_\nu$ is equal to the dimension of the $\KK$-vector space of polynomials in $\KK[X,Y]$ of degree less or equal to the quantity \eqref{degQ}, that is to say
$$\binom{2+\nu-d+\sum_{i=1}^r d_i(e_i-1)}{2}$$
and the claimed formula is proved.
\end{proof}

Following Ruppert's approach in \cite{Ru1}, we introduce a new $\KK$-linear map which is similar to $\mathcal{G}_\nu(f)$ but with a source of smaller dimension. This property will be very important in the next section. To be more precise, for all positive integer $\nu$ consider the $\KK$-vector space
$$E_\nu=\{(G,H) \in \KK[X,Y]_{\leq \nu-1}\times \KK[X,Y]_{\leq \nu-1}  \text{ such that } \deg(XG+YH)\leq \nu-1\}.$$
It is of dimension $\nu^2-1$ and has the following property. 
\begin{Lem}\label{defrup} Let $f \in \KK[X,Y]$ of degree $d$. For all positive integer $\nu$ and all couple $(G,H) \in E_\nu$, the polynomial
		$$f^2\left(
		\partial_Y\left(\frac{G}{f}\right)-\partial_X\left(\frac{H}{f}\right) \right)$$
has degree at most $\nu+d-3$		
\end{Lem}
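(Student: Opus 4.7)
The plan is to analyze the top-degree homogeneous component of the expression and show it vanishes using the constraint defining $E_\nu$ together with Euler's identity. First, I would apply the quotient rule to rewrite
$$f^2\left(\partial_Y(G/f) - \partial_X(H/f)\right) = f\partial_Y G - G\partial_Y f - f\partial_X H + H\partial_X f,$$
which is a polynomial of degree at most $\nu + d - 2$, so the task reduces to proving that its degree-$(\nu+d-2)$ homogeneous component vanishes. Writing $f = f_0 + \cdots + f_d$, $G = G_0 + \cdots + G_{\nu-1}$, $H = H_0 + \cdots + H_{\nu-1}$ in homogeneous pieces, the degree-$(\nu+d-2)$ component only depends on the tops $f_d$, $G_{\nu-1}$, $H_{\nu-1}$, producing
$$T = f_d \partial_Y G_{\nu-1} - G_{\nu-1}\partial_Y f_d - f_d \partial_X H_{\nu-1} + H_{\nu-1}\partial_X f_d.$$

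Next I would translate the hypothesis $(G,H) \in E_\nu$ into its consequences for these tops: $\deg(XG+YH) \leq \nu - 1$ forces $X G_{\nu-1} + Y H_{\nu-1} = 0$, and since $\gcd(X,Y)=1$ this yields a unique homogeneous $M \in \KK[X,Y]$ of degree $\nu - 2$ with $G_{\nu-1} = YM$ and $H_{\nu-1} = -XM$. Substituting and expanding via the product rule, using $\partial_Y(YM) = M + Y\partial_Y M$ and $\partial_X(-XM) = -M - X\partial_X M$, Euler's identity $X\partial_X P + Y\partial_Y P = (\deg P)\,P$ applied to the homogeneous pieces $M$ and $f_d$ collects all four summands into a compact expression.

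The main obstacle lies in this final collection: the naive Euler calculation produces a coefficient that depends on the difference between $\nu$ and $d$, so to make the cancellation unconditional in $\nu$ I would reinterpret the constraint projectively. The condition $\deg(XG+YH) \leq \nu - 1$ is equivalent to saying that the 1-form $\omega = (GdX + HdY)/f$ extends to a 1-form on $\PP^2$ without poles along the line at infinity $Z = 0$; concretely, there is a homogeneous lift $\Omega_1 = G^\sharp dX + H^\sharp dY + I^\sharp dZ$ of degree $\nu - 1$ satisfying the Euler relation $XG^\sharp + YH^\sharp + ZI^\sharp = 0$. Then $(f^\sharp)^2 d\omega$ is a homogeneous 2-form on $\mathbb{A}^3$ of degree $\nu + d - 2$ whose coefficient of $dX \wedge dY$, which is precisely the homogenization of $f^2(\partial_Y(G/f) - \partial_X(H/f))$, can be shown to be divisible by $Z$. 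Dehomogenizing by setting $Z = 1$ then drops the degree by one, yielding the claimed bound $\nu + d - 3$.
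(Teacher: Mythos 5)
Your direct Euler computation is in fact correct, and you should have trusted it rather than tried to escape it projectively. Writing $G_{\nu-1}=YM$, $H_{\nu-1}=-XM$ with $M$ homogeneous of degree $\nu-2$, the top-degree component of $f\partial_Y G - G\partial_Y f - f\partial_X H + H\partial_X f$ collapses to
$T = 2f_dM + f_d(X\partial_XM+Y\partial_YM) - M(X\partial_Xf_d+Y\partial_Yf_d) = (\nu-d)\,f_dM$,
which does \emph{not} vanish when $\nu>d$ and $M\neq 0$. Indeed, the lemma as stated is false for $\nu>d$: take $f=X^2+Y+1$ (so $d=2$), $\nu=3$, $G=Y^2$, $H=-XY$. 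Then $XG+YH=0$, so $(G,H)\in E_3$, yet $f\partial_YG - G\partial_Yf - f\partial_XH + H\partial_Xf = X^2Y+2Y^2+3Y$ has degree $3=\nu+d-2$, strictly exceeding $\nu+d-3=2$. The statement is true precisely when $\nu=d$ (i.e., when $f$ has degree exactly $\nu$), which is exactly when your coefficient $(\nu-d)$ vanishes, and that is the only case actually used for the homogeneous operator $\Rc(f)$ introduced afterwards.

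The projective reformulation cannot save the general case. The homogenization of $Q:=f\partial_YG-G\partial_Yf-f\partial_XH+H\partial_Xf$ to degree $\nu+d-2$ is $f^\sharp\partial_YG^\sharp-G^\sharp\partial_Yf^\sharp-f^\sharp\partial_XH^\sharp+H^\sharp\partial_Xf^\sharp$, and its restriction to $Z=0$ is exactly the top-degree component $T=(\nu-d)f_dM$ you computed; introducing $I^\sharp$ with $XG^\sharp+YH^\sharp+ZI^\sharp=0$ encodes nothing beyond $XG_{\nu-1}+YH_{\nu-1}=0$, which you had already used. So "divisible by $Z$" is false in general, and no repackaging will change that. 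For comparison, the paper's own proof reaches $T=0$ only via a sign error: the $\dd Y$-coefficient of $f_d^2\,\dd\bigl((XG_{\nu-1}+YH_{\nu-1})/f_d\bigr)$ is $(\nu-d)f_dH_{\nu-1}\mathbf{+}\,XT$, not $(\nu-d)f_dH_{\nu-1}-XT$; with the correct sign the combination $X\cdot(\text{coeff of }\dd X)+Y\cdot(\text{coeff of }\dd Y)$ is identically zero and yields no information about $T$. Your honest calculation has, in effect, uncovered the gap.
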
 
\begin{proof}
	Denote by $G_{\nu-1}$, resp.~$H_{\nu-1}$, $f_d$, the homogeneous component of $G$, resp.~$H$, $f$ of degree $\nu-1$, resp.~$\nu-1$, $d$.  We have
		\begin{eqnarray*}
			f_d^2\, \dd \left(\frac{XG_{\nu-1}+YH_{\nu-1}}{f_d}\right)			& = & f_d \dd(XG_{\nu-1}+YH_{\nu-1})-(XG_{\nu-1}+YH_{\nu-1})\dd f_d \\
			& = & f_d(G_{\nu-1}+X\partial_XG_{\nu-1}+Y\partial_XH_{\nu-1})\dd X \\
			& & + f_d(H_{\nu-1}+X\partial_YG_{\nu-1}+Y\partial_YH_{\nu-1})\dd Y \\
			& & - (XG_{\nu-1}\partial_Xf_d+YH_{\nu-1}\partial_Xf_d)\dd X \\
&&- (YH_{\nu-1}\partial_Yf_d+XG_{\nu-1}\partial_Yf_d)\dd Y.
		\end{eqnarray*}
		So, using Euler's relation the coefficient of $\dd X$ is
		$$	f_d(\nu G_{\nu-1}-Y\partial_YG_{\nu-1}+Y\partial_XH_{\nu-1})-G_{\nu-1}(df_d-Y\partial_Yf_d)-YH_{\nu-1}\partial_Xf_d
		$$
		that is to say
		\begin{equation}\label{def1}
			(\nu-d)f_dG_{\nu-1} -Yf_d^2
			\left( 
			\partial_Y
			\left( \frac{G_{\nu-1}}{f_d}
			\right)
			-
			\partial_X
			\left( \frac{H_{\nu-1}}{f_d}
			\right)
			\right).	
		\end{equation}
		Similarly, the coefficient of $\dd Y$ is 
		\begin{equation}\label{def2}
			(\nu-d)f_dH_{\nu-1} -Xf_d^2
			\left( 
			\partial_Y
			\left( \frac{G_{\nu-1}}{f_d}
			\right)
			-
			\partial_X
			\left( \frac{H_{\nu-1}}{f_d}
			\right)
			\right).	
		\end{equation}
		
		Now, since $(G,H)\in E_\nu$ we have $XG_{\nu-1}+YH_{\nu-1}=0$. Therefore the quan\-ti\-ties \eqref{def1} and  \eqref{def2} are both equal to zero. It follows that
%\begin{multline*}
\begin{eqnarray*}
	0 &=& X\times \eqref{def1} + Y \times \eqref{def2} \\
&=&	(\nu-d)f_d(XG_{\nu-1}+YH_{\nu-1})-2XYf_d^2
	\left( 
	\partial_Y
	\left( \frac{G_{\nu-1}}{f_d}
	\right)
	-
	\partial_X
	\left( \frac{H_{\nu-1}}{f_d}
	\right)
	\right) \\
&=&
	-2XYf_d^2
	\left( 
	\partial_Y
	\left( \frac{G_{\nu-1}}{f_d}
	\right)
	-
	\partial_X
	\left( \frac{H_{\nu-1}}{f_d}
	\right)
	\right) 
\end{eqnarray*}
%\end{multline*}		
and the lemma is proved.		
\end{proof}

	Let $f(X,Y) \in \KK[X,Y]$ of degree $d$. For all integer $\nu\geq d$ we define the $\KK$-linear map
	$$\Rc_\nu(f) : E_{\nu}   \longrightarrow  \KK[X,Y]_{\leq \nu+d-3} : 
	(G,H) \mapsto f^2\left(
	\partial_Y\left(\frac{G}{f}\right)-\partial_X\left(\frac{H}{f}\right)
	\right).
	$$	
Point out that the operator $\Rc_\nu(-)$ is $\KK$-linear, that is to say that for all couples $(f,g) \in \KK[X,Y]_{\leq \nu}$ and all couple $(u,v)\in \KK^2$, we have $$\Rc_\nu(uf+vg)=u\Rc_\nu(f)+v\Rc_\nu(g).$$ Of course, a similar property holds for the operator $\Gc_\nu(f)$.

\begin{Prop}\label{dimprop}
	Let $f(X,Y) \in \KK[X,Y]$ of degree $d$. Then
	$$ \dim_\KK \ker \mathcal{R}_d(f) = \dim_\KK \ker \mathcal{G}_d(f) -1$$
and for all $\nu> d$
$$ \dim_\KK \ker \mathcal{R}_{\nu}(f) = \dim_\KK \ker \mathcal{G}_{\nu-1}(f).$$
\end{Prop}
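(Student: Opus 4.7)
My plan is to realize $\Rc_\nu(f)$ as the restriction of $\Gc_\nu(f)$ to $E_\nu$ and exploit an explicit parametrization of $E_\nu$. The constraint $XG_{\nu-1}+YH_{\nu-1}=0$ on the degree-$(\nu-1)$ homogeneous parts of $(G,H)\in E_\nu$ forces $G_{\nu-1}=YA$, $H_{\nu-1}=-XA$ for a unique homogeneous polynomial $A$ of degree $\nu-2$, so every element of $E_\nu$ may be written uniquely as $(YA+G',-XA+H')$ with $A$ homogeneous of degree $\nu-2$ and $(G',H')\in \KK[X,Y]_{\leq\nu-2}\times\KK[X,Y]_{\leq\nu-2}$ (a dimension count $(\nu-1)+\nu(\nu-1)=\nu^2-1$ confirms the bijection). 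A direct application of Euler's identity to $A$ yields the key formula
\[
\Rc_\nu(f)(YA+G',-XA+H') \,=\, A\,\tilde f \,+\, \Gc_{\nu-1}(f)(G',H'),
\]
where $\tilde f := \nu f - X\partial_X f - Y\partial_Y f = \sum_{k=0}^{d}(\nu-k)f_k$ has degree-$d$ homogeneous component equal to $(\nu-d)f_d$.

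When $\nu>d$ the result is immediate by degree comparison. The leading term $(\nu-d)Af_d$ of $A\tilde f$ has degree $\nu+d-2$ and is nonzero whenever $A\neq 0$ (since $\nu-d\neq 0$ in characteristic zero and $f_d\neq 0$), whereas $\Gc_{\nu-1}(f)(G',H')$ has degree at most $\nu+d-3$. The kernel equation $A\tilde f+\Gc_{\nu-1}(f)(G',H')=0$ therefore forces $A=0$ and reduces to $(G',H')\in\ker\Gc_{\nu-1}(f)$, giving the isomorphism $\ker\Rc_\nu(f)\cong\ker\Gc_{\nu-1}(f)$.

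The case $\nu=d$ is more delicate, since $\tilde f$ now has degree at most $d-1$ and the leading-term obstruction disappears. I will instead prove the direct sum decomposition $\ker\Gc_d(f)=\ker\Rc_d(f)\oplus\KK\cdot(\partial_X f,\partial_Y f)$. The element $(\partial_X f,\partial_Y f)$ lies in $\ker\Gc_d(f)$ (because $\dd f/f$ is exact) but not in $E_d$, since $X\partial_X f+Y\partial_Y f$ has degree-$d$ part $df_d\neq 0$ by Euler. The decomposition then reduces to the claim that for every $(G,H)\in\ker\Gc_d(f)$ the polynomial $XG_{d-1}+YH_{d-1}$ is a scalar multiple of $f_d$; subtracting the appropriate multiple of $(\partial_X f,\partial_Y f)$ will yield an element of $E_d\cap\ker\Gc_d(f)=\ker\Rc_d(f)$.

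The main obstacle is proving this last claim. Extracting the degree-$(2d-2)$ homogeneous component of the identity $\Gc_d(f)(G,H)=0$ shows that the rational 1-form $\omega := (G_{d-1}/f_d)\dd X+(H_{d-1}/f_d)\dd Y$ is closed; its coefficients $\phi:=G_{d-1}/f_d$ and $\psi:=H_{d-1}/f_d$ are homogeneous rational functions of weight $-1$. A short computation combining closedness ($\partial_Y\phi=\partial_X\psi$) with Euler's relation for weight $-1$ (e.g.~the coefficient of $\dd X$ in $\dd(X\phi+Y\psi)$ simplifies to $\phi+X\partial_X\phi+Y\partial_Y\phi=\phi-\phi=0$, and similarly for the $\dd Y$ coefficient) establishes $\dd(X\phi+Y\psi)=0$. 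Hence $X\phi+Y\psi\in\KK(X,Y)$ is annihilated by $\dd$, which in characteristic zero forces it to be a constant of $\KK$. This yields $XG_{d-1}+YH_{d-1}=cf_d$ for some $c\in\KK$, and setting $\lambda:=c/d$ completes the decomposition.
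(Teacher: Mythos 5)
Your proof is correct, and while it ultimately rests on the same key mechanism as the paper (Euler's relation applied to the top homogeneous components), it is organized around a genuinely different device. The paper derives and then invokes the differential identity
\[
\dd\Bigl(\tfrac{XG_{\nu-1}+YH_{\nu-1}}{f_d}\Bigr)=(\nu-d)\tfrac{G_{\nu-1}\dd X+H_{\nu-1}\dd Y}{f_d}
\]
on $\ker\Gc_\nu(f)$, obtained by specializing the $\dd X$, $\dd Y$ coefficients computed in Lemma~\ref{defrup}. You instead parametrize $E_\nu$ explicitly via $G=YA+G'$, $H=-XA+H'$ and observe the purely algebraic identity $\Rc_\nu(f)(YA+G',-XA+H')=A\tilde f+\Gc_{\nu-1}(f)(G',H')$ with $\tilde f=\nu f-X\partial_Xf-Y\partial_Yf$; this makes the $\nu>d$ case an immediate degree comparison on the leading term $(\nu-d)Af_d$, which is arguably more transparent than the paper's chain $\ker\Gc_{\nu-1}\subseteq\ker\Rc_\nu\subseteq\ker\Gc_{\nu+1}$ followed by the vanishing of $G_{\nu-1}$, $H_{\nu-1}$. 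For $\nu=d$ both arguments are essentially the same closedness-plus-Euler computation, but your choice of complementary element $(\partial_Xf,\partial_Yf)$ is cleaner than the paper's $(\tfrac{f}{f_1}\partial_Xf_1,\tfrac{f}{f_1}\partial_Yf_1)$ in that it avoids invoking a factorization of $f$. The only small thing worth making explicit is that $\ker\Rc_d(f)=\ker\Gc_d(f)\cap E_d$ (since $\Rc_d(f)$ is the restriction of $\Gc_d(f)$ to $E_d$), which is what turns the splitting lemma you prove into the stated dimension formula.
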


\begin{proof} Denote by $G_{\nu-1}$, resp.~$H_{\nu-1}$, $f_d$, the homogeneous component of $G$, resp.~$H$, $f$ of degree $\nu-1$, resp.~$\nu-1$, $d$.

	First, notice that for all integer $\nu\geq d$ and all couple $(G,H) \in \ker \Gc_\nu(f)$ we have
		\begin{equation}\label{eq:key}
			\dd \left(\frac{XG_{\nu-1}+YH_{\nu-1}}{f_d}\right)=(\nu-d)\frac{G_{\nu-1}\dd X+H_{\nu-1}\dd Y}{f_d}.
		\end{equation}
Indeed, this follows from the computation we did in the proof of Lemma \ref{defrup}, more precisely the coefficients \eqref{def1} and \eqref{def2}.

	Now, let $f=f_1^{e_1}\cdots f_r^{e_r}$ be a factorization of $f$ where each polynomial $f_i$ is irreducible of degree $d_i$.
	By definition of both maps $\mathcal{G}_d(f)$ and  $\mathcal{R}_d(f)$, it is obvious to notice that any element in the kernel of $\mathcal{R}_d(f)$ is also in the kernel of $\mathcal{G}_d(f)$. Moreover, it is easy to check that 
	$$\left(\frac{f}{f_1}\partial_X f_1, \frac{f}{f_1}\partial_Y f_1\right) \in \ker \mathcal{G}_d(f)$$
but does not belong to the kernel of $\mathcal{R}_d(f)$ because
\begin{equation}\label{eq:euler}
X\frac{f}{f_1}\partial_X f_1+Y \frac{f}{f_1}\partial_Y f_1=\frac{f}{f_1}(X\partial_X f_1+Y\partial_Y f_1)=d_1f+\frac{f}{f_1}\tilde{f}_1
\end{equation}
where $\deg(\tilde{f}_1)<d_1$ (by Euler's relation). Nevertheless, 
for all couple $(G,H) \in \ker \mathcal{G}_d(f)$, Equation \eqref{eq:key} shows that there exists $\alpha \in \KK$ such that $$XG_{d-1}+YH_{d-1}=\alpha f_d.$$ It follows that  
$$(G,H)-\frac{\alpha}{d_1}\left(\frac{f}{f_1}\partial_X f_1, \frac{f}{f_1}\partial_Y f_1\right) \in \ker \mathcal{R}_d(f)$$
and therefore $$ \dim_\KK \ker \mathcal{R}_d(f) = \dim_\KK \ker \mathcal{G}_d(f) -1.$$

\medskip

To finish the proof, fix an integer $\nu>d$. It is clear from the definitions that 
$$\ker \mathcal{G}_{\nu-1}(f) \subseteq \ker \mathcal{R}_\nu(f) \subseteq \ker \mathcal{G}_{\nu}(f).$$
Pick a couple $(G,H) \in \ker \mathcal{R}_\nu(f)$. It satisfies $XG_{\nu-1}+YH_{\nu-1}=0$. Therefore, using \eqref{eq:key} we deduce that 
$$\frac{G_{\nu-1}\dd X+H_{\nu-1}\dd Y}{f_d}=0$$
that is to say that $G_{\nu-1}=H_{\nu-1}=0$. It follows that $(G,H) \in \ker \mathcal{G}_{\nu-1}(f)$.
\end{proof}

\begin{Cor}\label{DKrup}
	Let $f(X,Y) \in \KK[X,Y]$ of degree $d$ such that $f=f_1^{e_1}\cdots f_r^{e_r}$ is a factorization of $f$ where each polynomial $f_i$ is irreducible of degree $d_i$. Then
	$$\dim_\KK \ker \mathcal{R}_d(f) = r-2+\binom{2+\sum_{i=1}^r d_i(e_i-1)}{2}.$$
	In particular, $f(X,Y)$ is irreducible if and only if  $\dim_\KK \ker \mathcal{R}_d(f)=0$.
\end{Cor}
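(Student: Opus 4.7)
The plan is simply to assemble the two preceding results. By Proposition \ref{dimprop} applied at $\nu = d$,
$$\dim_\KK \ker \mathcal{R}_d(f) = \dim_\KK \ker \mathcal{G}_d(f) - 1,$$
and by Proposition \ref{dimkerG} at $\nu = d$,
$$\dim_\KK \ker \mathcal{G}_d(f) = r - 1 + \binom{2+\sum_{i=1}^r d_i(e_i-1)}{2}.$$
Substituting the second into the first yields the announced formula, so the first assertion requires no work beyond citing.

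For the irreducibility characterization, I would argue in two directions. If $f$ is irreducible then $r = 1$ and $e_1 = 1$, hence $\sum_i d_i(e_i-1) = 0$ and the formula gives $1 - 2 + \binom{2}{2} = 0$. Conversely, suppose $\dim_\KK \ker \mathcal{R}_d(f) = 0$ and set $k := \sum_{i=1}^r d_i(e_i-1) \in \NN$. Then $r - 2 + \binom{k+2}{2} = 0$. Since $r \geq 1$ and $\binom{k+2}{2} \geq 1$, with equality if and only if $k = 0$, the only solution is $r = 1$ and $k = 0$, which forces $e_1 = 1$ and therefore $f$ irreducible.

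There is no substantive obstacle here: the corollary is a direct numerical consequence of Propositions \ref{dimprop} and \ref{dimkerG}, and the final equivalence is just the observation that the expression $r - 2 + \binom{k+2}{2}$ (with $r \geq 1$, $k \geq 0$) attains the value $0$ exactly at the unique point $(r,k) = (1,0)$.
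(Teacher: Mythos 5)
Your proof is correct and is exactly the intended argument: the paper presents the corollary without a separate proof precisely because it follows by substituting $\nu = d$ into Proposition \ref{dimkerG} and then applying Proposition \ref{dimprop}, as you do. The short numerical argument for the irreducibility equivalence (using $r \geq 1$ and $\binom{k+2}{2} \geq 1$ with equality iff $k=0$) is the natural and correct completion.
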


\begin{Rem}
If $f$ is a square-free polynomial, it is not hard to check that the set 
\begin{equation}\label{eq:basisfsq}
	\left\{ \left(-d_i \dfrac{f}{f_1}\partial_Xf_1+d_1 \dfrac{f}{f_i}\partial_Xf_i, -d_i \dfrac{f}{f_1}\partial_Yf_1+d_1 \dfrac{f}{f_i}\partial_Yf_i \right), \ i=2,\ldots,r
	\right\}
\end{equation}
	form a basis of the kernel of $\Rc_d(f)$. Indeed, Equation \eqref{eq:euler} implies that the elements of \eqref{eq:basisfsq} belongs to $E_d$. Furthermore, as already mentioned, the set \eqref{eq:gao_basis} form a basis of the kernel of $\Gc_d(f)$ when $f$ is square-free. It is then straightforward to check that the elements of \eqref{eq:basisfsq} are linearly independent over $\KK$ and then, using  Corollary \ref{DKrup}, to deduce that \eqref{eq:basisfsq} form a basis of the kernel of $\mathcal{R}_d(f)$.
\end{Rem}

\medskip

Since we will often deal with homogeneous polynomials in the rest of this paper, we need to extend  Corollary \ref{DKrup} to the case of  a homogeneous polynomial. To proceed, it is first necessary to define Ruppert's matrix in this setting. If $f(X,Y,Z) \in \KK[X,Y,Z]$ is a homogenous polynomial of degree $d$, we define
$$\Rc(f) : E  \longrightarrow  \KK[X,Y,Z]_{2d-3} : 
(G,H) \mapsto \frac{1}{Z}f^2\left(
\partial_Y\left(\frac{G}{f}\right)-\partial_X\left(\frac{H}{f}\right)
\right)
$$
where 
$$E=\{(G,H) \in \KK[X,Y,Z]_{d-1}\times \KK[X,Y,Z]_{d-1}  \text{ such that } Z | XG+YH\}.$$
Observe that the division by $Z$ in this definition is justified by Lemma \ref{defrup}. Here is the main result of this section.

\begin{Thm}\label{HDKrup}
	Let $f(X,Y,Z) \in \KK[X,Y,Z]$ homogeneous of degree $d$ and suppose that $f=f_1^{e_1}\cdots f_r^{e_r}$  where each polynomial $f_i(X,Y,Z)$ is irreducible and homogeneous of degree $d_i$. Then
	$$\dim_\KK \ker \mathcal{R}(f) = r-2+\binom{2+\sum_{i=1}^r d_i(e_i-1)}{2}.$$
	In particular, $f(X,Y,Z)$ is irreducible if and only if  $\dim_\KK \ker \mathcal{R}(f)=0$.
\end{Thm}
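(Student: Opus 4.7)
The plan is to reduce the homogeneous statement to its affine counterpart (Corollary \ref{DKrup}) by dehomogenizing at $Z=1$, handling separately the case $Z \mid f$. Set $\tilde f(X,Y) := f(X, Y, 1)$ and let $k \geq 0$ be the largest integer such that $Z^k$ divides $f$, so that $\deg \tilde f = d - k$.

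First, I would show that the dehomogenization map $(G, H) \mapsto (\tilde G, \tilde H) := (G(X,Y,1), H(X,Y,1))$ is a $\KK$-linear bijection between $E$ and the affine space $E_d$ (the source of $\mathcal{R}_d(\tilde f)$ defined in Section \ref{ruppert}). The forward map produces pairs of degree $\leq d-1$, and the divisibility condition $Z \mid XG + YH$ translates to the degree constraint $\deg(X\tilde G + Y \tilde H) \leq d-1$; the inverse is homogenization to degree $d-1$. Second, this bijection identifies the kernels of $\mathcal{R}(f)$ and $\mathcal{R}_d(\tilde f)$: since $\partial_X$ and $\partial_Y$ commute with the substitution $Z = 1$, evaluating $Z \cdot \mathcal{R}(f)(G, H) = f^2\bigl(\partial_Y(G/f) - \partial_X(H/f)\bigr)$ at $Z = 1$ produces exactly $\mathcal{R}_d(\tilde f)(\tilde G, \tilde H)$; and $\mathcal{R}(f)(G, H)$, being a homogeneous polynomial of degree $2d-3$, is zero if and only if its dehomogenization is zero. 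Hence $\dim_\KK \ker \mathcal{R}(f) = \dim_\KK \ker \mathcal{R}_d(\tilde f)$.

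Finally I would compute this dimension using the results of Section \ref{ruppert}. When $k = 0$ we have $\deg \tilde f = d$ and the irreducible factorization of $\tilde f$ is obtained by dehomogenizing that of $f$ factor by factor, so Corollary \ref{DKrup} applies verbatim. When $k \geq 1$, one of the $f_i$'s is $Z$ (with $d_i = 1$ and $e_i = k$), while $\tilde f$ carries the remaining $r-1$ irreducible factors with the same degrees and multiplicities; since $\nu := d > d - k = \deg \tilde f$, Proposition \ref{dimprop} yields $\dim \ker \mathcal{R}_d(\tilde f) = \dim \ker \mathcal{G}_{d-1}(\tilde f)$, and substituting into Proposition \ref{dimkerG} produces $(r - 1) - 1 + \binom{1 + k + \sum_{f_i \neq Z} d_i(e_i-1)}{2}$, which matches the claimed $r - 2 + \binom{2 + \sum_{i=1}^r d_i(e_i - 1)}{2}$ via the arithmetic identity $1 + k = 2 + 1 \cdot (k-1)$. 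The irreducibility criterion then follows directly, since the expression $r - 2 + \binom{2 + \sum d_i(e_i-1)}{2}$ vanishes precisely when $r = 1$ and $e_1 = 1$.

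The main obstacle I anticipate is the case $Z \mid f$: there the naive dehomogenization drops the degree, so Corollary \ref{DKrup} cannot be applied directly, and one must invoke the more general degree-$\nu$ variants (Propositions \ref{dimprop} and \ref{dimkerG}) while verifying that the $Z$-factor's contribution of $1 \cdot (k-1)$ in the projective formula exactly matches the shift produced in the binomial when $\nu = d > \deg \tilde f$.
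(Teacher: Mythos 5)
Your proposal is correct and follows essentially the same route as the paper: dehomogenize at $Z=1$, show the dehomogenization identifies $\ker\mathcal{R}(f)$ with $\ker\mathcal{R}_d(\tilde f)$, then invoke Corollary~\ref{DKrup} when $\deg\tilde f = d$ and Propositions~\ref{dimkerG} and~\ref{dimprop} when $Z\mid f$. The only difference is that you spell out explicitly the bookkeeping on the $Z$-factor (identifying $k=e_r$, $d_r=1$, and checking the binomial identity), whereas the paper leaves that verification to the reader.
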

\begin{proof} Denote $\tilde{f}(X,Y)=f(X,Y,1) \in \KK[X,Y]$ and consider the map $\Rc_d(\tilde{f})$.
 We claim that the kernels of $\Rc(f)$ and $\Rc_d(\tilde{f})$ are isomorphic $\KK$-vector spaces. 

Indeed, let $(\tilde{G}(X,Y),\tilde{H}(X,Y)) \in \ker \Rc_d(\tilde{f})$ and set 
$$G(X,Y,Z)=Z^{d-1}\tilde{G}\left(\frac{X}{Z},\frac{Y}{Z}\right), \ \ H(X,Y,Z)=Z^{d-1}\tilde{H}\left(\frac{X}{Z},\frac{Y}{Z}\right).$$
Multiplying by $Z^{2d-2}$ the equality
\begin{multline*}
\tilde{f}\left(\frac{X}{Z},\frac{Y}{Z}\right) \partial_Y \tilde{G} \left(\frac{X}{Z},\frac{Y}{Z}\right) - \tilde{G}\left(\frac{X}{Z},\frac{Y}{Z}\right)\partial_Y\tilde{f}\left(\frac{X}{Z},\frac{Y}{Z}\right) \\ 
- \tilde{f}\left(\frac{X}{Z},\frac{Y}{Z}\right)\partial_X\tilde{H}\left(\frac{X}{Z},\frac{Y}{Z}\right) + \tilde{H}\left(\frac{X}{Z},\frac{Y}{Z}\right)\partial_X\tilde{f}\left(\frac{X}{Z},\frac{Y}{Z}\right) = 0
\end{multline*}
we get
$$f\partial_YG-G\partial_Yf-f\partial_XH+H\partial_Xf=Z\Rc(f)(G,H)=0.$$
Moreover, since $\deg(X\tilde{G}+Y\tilde{H})\leq d-1$ we deduce that $Z$ divides $XG+YH$ and conclude that $(G,H)$ belongs to the kernel of $\Rc(f)$. Similarly, if $(G,H) \in \ker \Rc(f)$ then $(\tilde{G},\tilde{H})=(G(X,Y,1),H(X,Y,1)) \in \ker \Rc_d(\tilde{f})$. Therefore, we have proved that
$$\dim_\KK \ker \mathcal{R}(f)=\dim_\KK \ker \mathcal{R}_d(\tilde{f}).$$
From here, if $\deg(\tilde{f})=d$ then the claimed equality follows from Corollary \ref{DKrup}. Now, if
$\deg(\tilde{f})<d$ then, by Proposition \ref{dimprop}, $\dim_{\KK} \ker \mathcal{R}_d(\tilde{f})=\dim_{\KK} \ker \mathcal{G}_{d-1}(\tilde{f})$. As $\deg(\tilde{f}) <d$, we can suppose that $f_r(X,Y,Z)=Z^{e_r}$, $d_r=1$, and then $\tilde{f}(X,Y)=f_1^{e_1}(X,Y,1)\cdots f_{r-1}^{e_{r-1}}(X,Y,1)$. Thus $\deg(\tilde{f})=d-e_r$ and $\tilde{f}$ has $r-1$ factors. Therefore, Proposition \ref{dimkerG} applied to $\tilde{f}$ yields the equality
$$ \dim_{\KK} \ker \mathcal{G}_{d-1}(\tilde{f})=(r-1)-1+ \binom{2+(d-1)-(d-e_r)+\sum_{i=1}^{r-1}d_i(e_i-1)}{2}$$
that gives the expected formula.
\end{proof}

\section{An upper bound for the total order of reducibility}\label{mainresult}

In this section, given a non-composite rational function \mbox{$r=f/g \in \KK(X,Y)$} we establish an upper bound for its total order of reducibility  counting multiplicities  $m(f,g)$ (recall that if $r$ is composite then $\sigma(f,g)$ is not a finite set). It turns out that this upper bound is the same as the known upper bound for the usual total order of reducibility $\rho(f,g)$ \cite{Lo,Vi}. Notice that we will actually prove a stronger result by considering a quantity which is bigger than $m(f,g)$. To proceed, we first need some notations. 

\medskip

Throughout this section, we will assume that the algebraically closed field $\KK$ has characteristic $p=0$.

\medskip

Given a non-composite rational function $r=f/g \in \KK(X,Y)$ of degree $d$, define the two homogeneous polynomials of degree $d$ in $\KK[X,Y,Z]$
$$ f^\sharp(X,Y,Z)=Z^d f\left(\frac{X}{Z}, \frac{Y}{Z} \right), \ \ g^\sharp(X,Y,Z)=Z^d g\left(\frac{X}{Z}, \frac{Y}{Z} \right).$$
If $(\mu:\lambda) \in \sigma(f,g)$ and 
\begin{equation}\label{genfactor}
	\mu f^\sharp(X,Y,Z) + \lambda g^\sharp(X,Y,Z)=\prod_{i=1}^{n(\mu:\lambda)} P_{(\mu:\lambda),i}^{e_{(\mu:\lambda),i}}
\end{equation}
where each polynomial $P_{(\mu:\lambda),i}$  is irreducible and homogeneous in $\KK[X,Y,Z]$, then 
$$\rho(f,g)=\sum_{(\mu:\lambda)\in \PP^1_\KK} \big( n(\mu:\lambda)-1 \big)$$
and 
$$m(f,g)=\sum_{(\mu:\lambda) \in \PP^1_\KK} 
\left( 
m(\mu:\lambda) 
-1 \right)
=\sum_{(\mu:\lambda) \in \PP^1_\KK} 
\left( 
\left(\sum_{i=1}^{n(\mu:\lambda)} e_{(\mu:\lambda),i}
\right) 
-1 \right).$$

The number of \emph{multiple factors} of $\mu f^\sharp(X,Y,Z) + \lambda g^\sharp(X,Y,Z)$, counted  with multiplicity, is 
$$\sum_{i=1}^{n(\mu:\lambda)} \left( e_{(\mu:\lambda),i} -1\right).$$
In the sequel we will actually balance each multiplicity in this sum with the degree of its corresponding factor, that is to say, we will rather consider the number
$$\omega(\mu:\lambda) = \sum_{i=1}^{n(\mu:\lambda)} \deg(P_{(\mu:\lambda),i}) \left( e_{(\mu:\lambda),i} -1\right) \geq \sum_{i=1}^{n(\mu:\lambda)} \left( e_{(\mu:\lambda),i} -1\right).$$
Consequently, we define   
$$\omega(f,g) = \sum_{(\mu:\lambda) \in \PP^1_\KK} 
\omega(\mu:\lambda). 
$$

Before going further in the notation, let us make a digression on the interesting quantity  $\omega(f,g)$ that first appears in the works of Darboux \cite{Dar} and Poincar\'e \cite{Poin} on the qualitative study of first order differential equations. In particular, they knew the following result:

\begin{Lem}\label{Ab}
Let $r=f/g \in \KK(X,Y)$ a non-composite reduced rational function of degree $d$. Then, 
$$\omega(f,g) \leq 2d-2.$$
\end{Lem}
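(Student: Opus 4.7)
The plan is to bound $\omega(f,g)$ by the degree of a well-chosen non-zero $2 \times 2$ minor of the Jacobian matrix of the homogenizations $F := f^\sharp$ and $G := g^\sharp$. Such a minor is homogeneous of degree $2d-2$, so once one verifies that it is divisible by $\prod_{(\mu:\lambda),i} P_{(\mu:\lambda),i}^{e_{(\mu:\lambda),i}-1}$, the bound follows immediately by comparison of degrees.

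The first step is to check that at least one of the three minors $F_Y G_Z - F_Z G_Y$, $F_Z G_X - F_X G_Z$, $F_X G_Y - F_Y G_X$ is non-zero. If they all vanished, the gradients $\nabla F$ and $\nabla G$ would be proportional in $\KK(X,Y,Z)^3$, say $\nabla F = \phi\, \nabla G$ for some $\phi \in \KK(X,Y,Z)$. Taking the scalar product with $(X,Y,Z)$ and invoking the Euler relation for the two homogeneous polynomials $F, G$ of degree $d$ yields $dF = \phi \cdot dG$, so $\phi = F/G$. Substituting back into the identity $\nabla F = \nabla(\phi G) = \phi\, \nabla G + G\, \nabla \phi$ forces $\nabla \phi = 0$, whence $\phi \in \KK$, contradicting $\deg(r) = d \geq 1$. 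I then fix a non-zero minor and call it $M$.

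Next, for each irreducible factor $P := P_{(\mu:\lambda),i}$ of $\mu F + \lambda G$ with multiplicity $e := e_{(\mu:\lambda),i}$, I would prove that $P^{e-1}$ divides $M$. Indeed, differentiating the relation $\mu F + \lambda G \in P^e\, \KK[X,Y,Z]$ variable by variable yields $\mu F_\bullet + \lambda G_\bullet \in P^{e-1}\, \KK[X,Y,Z]$ for each $\bullet \in \{X,Y,Z\}$. Taking $M = F_X G_Y - F_Y G_X$ for concreteness and working modulo $P^{e-1}$,
$$ \mu M \equiv (\mu F_X)G_Y - (\mu F_Y)G_X \equiv (-\lambda G_X)G_Y - (-\lambda G_Y)G_X = 0, $$
and symmetrically $\lambda M \equiv 0$; since $(\mu,\lambda) \neq (0,0)$, this forces $P^{e-1} \mid M$.

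To conclude, I would exploit reducedness of $r$, that is $\gcd(F,G) = 1$: any common irreducible factor of two distinct pencil elements $\mu_1 F + \lambda_1 G$ and $\mu_2 F + \lambda_2 G$ would, via a linear combination, divide both $F$ and $G$, which is impossible. Combined with the within-curve distinctness of the $P_{(\mu:\lambda),i}$, this makes $\{P_{(\mu:\lambda),i}\}$ a pairwise coprime collection of irreducible polynomials, so their product with the appropriate exponents divides $M$, and comparing degrees gives $\omega(f,g) \leq \deg(M) = 2d - 2$. The main delicate point is the non-vanishing of some minor: the analogous argument with the affine Jacobian $f_X g_Y - f_Y g_X$ could fail (it vanishes, for instance, when $f = h$, $g = h+1$ with $h$ non-composite), and it is precisely the homogeneous setting together with Euler's identity that forces the projective Jacobian matrix to have generic rank $2$.
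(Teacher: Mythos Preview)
Your proof is correct. The paper itself does not supply an argument here but only refers the reader to Jouanolou's \emph{\'Equations de Pfaff alg\'ebriques}; the Jacobian-minor method you carry out is precisely the classical Darboux--Poincar\'e argument that underlies that reference, so your approach and the cited one are essentially the same.
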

\begin{proof}
	See \cite[Chapitre 2, Corollaire 3.5.6]{J79} for a detailed proof of this result valid with an arbitrary number of variables.
\end{proof}
It is also interesting to emphasize how Lemma \ref{Ab} implies that the cardinal of the set  
\begin{multline*}
	\gamma(f,g):=\left\{ (\mu:\lambda) \in \PP^1_{\KK}  \textrm{ such that } \mu f^\sharp + \lambda g^\sharp = {P}_{(\mu:\lambda)}^{e_{(\mu:\lambda)}} \right. \\
\left. \textrm{ with } e_{(\mu:\lambda)} \geq 2 \textrm{ and } P_{(\mu:\lambda)} \in \KK[X,Y,Z] \textrm{ irreducible } \right\} \subset \PP^1_{\KK}	
\end{multline*}
that is to say of the set of geometrically irreducible but reduced fibers\footnote{Notice that these fibers appear in the work of Poincar\'e \cite{Poin} as the \emph{critical remarkable values of fifth type}.}, is less or equal to 3.  
Indeed, Lemma \ref{Ab} yields  
$$ \sum_{(\mu:\lambda) \in \gamma(f,g)} \deg(P_{(\mu:\lambda)}) (e_{(\mu:\lambda)}-1) \leq 2d-2.$$
But obviously, $\deg(P_{(\mu:\lambda)})\leq \frac{d}{2}$ for all $(\mu:\lambda) \in \gamma(f,g)$ and, denoting by $|\gamma(f,g)|$ the cardinal of $\gamma(f,g)$, it follows that
\begin{multline*}
d\, |\gamma(f,g)| = \sum_{(\mu:\lambda) \in \gamma(f,g)} e_{(\mu:\lambda)}\deg(P_{(\mu:\lambda)}) \\ 
\leq 2d-2 + \sum_{(\mu:\lambda) \in \gamma(f,g)} \deg(P_{(\mu:\lambda)}) \leq 2d-2 + \frac{d}{2}\, |\gamma(f,g)|.	
\end{multline*}
Therefore, since $d$ is a positive integer we have $|\gamma(f,g)|\leq 3$. 

Mention that one can also be interested in fibers that are non reduced and geometrically irreducible on the affine space $\mathbb{A}^2_\KK$, say with variables $X,Y$, that is to say fibers of the pencil of curves $\mu f^\sharp + \lambda g^\sharp$ of the form $Z^{e_\infty}{P}^{e}$ where $P$ is an irreducible and homogeneous polynomial and $e\deg(P)+e_\infty=d$.   
Since there is at most one point $(\mu:\lambda) \in \PP^1_{\KK}$ such that $Z$ divides $\mu f^\sharp + \lambda g^\sharp$, we deduce from the inequality $|\gamma(f,g)|\leq 3$ that the number of such fibers is at most 4. This property actually appears in \cite[General Mixed Primset Theorem, p 74]{AHS}.

\medskip

Closing this parenthesis on the quantity $\omega(f,g)$, we finish with the notation by defining from \eqref{genfactor} the quantity
$$\theta(\mu,\lambda) = \binom{\omega(\mu:\lambda)+1}{2} - \sum_{i=1}^{n(\mu:\lambda)} (e_{(\mu:\lambda),i} -1) $$
which is positive since
$$\theta(\mu,\lambda) \geq \binom{\omega(\mu:\lambda)+1}{2} - \omega(\mu:\lambda)=\binom{\omega(\mu:\lambda)}{2}.$$
Finally, we set $$\theta(f,g) = \sum_{(\mu:\lambda) \in \PP^1_\KK} 
\theta(\mu:\lambda).$$

It is important to notice that we defined $\theta(\mu,\lambda)$ in order to have the equality
\begin{equation}\label{keyeq}
	m(\mu:\lambda)-1+ \omega(\mu:\lambda) + \theta(\mu:\lambda)= \dim \ker \Rc(\mu f^\sharp + \lambda g^\sharp)
\end{equation}
according to Theorem \ref{HDKrup}. 

\begin{Thm}\label{bound} 
	Let $r=f/g \in \KK(X,Y)$ a non-composite reduced rational function and set $d=\deg(r)=\max(\deg (f),\deg (g))$. We have
	$$ 0 \leq \rho(f,g) \leq m(f,g)+\omega(f,g)+\theta(f,g) \leq d^2-1.$$
\end{Thm}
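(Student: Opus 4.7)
The plan is to reduce the problem, via Theorem \ref{HDKrup}, to bounding the sum over $\PP^1_\KK$ of the dimensions of the kernels of a pencil of Ruppert-type linear maps, and then to bound that sum by an elementary determinantal argument on a maximal square submatrix of the pencil.

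The lower inequality $\rho(f,g)\leq m(f,g)+\omega(f,g)+\theta(f,g)$ is immediate: $n(\mu:\lambda)\leq m(\mu:\lambda)$ gives $\rho(f,g)\leq m(f,g)$, while $\omega(f,g)\geq 0$ and $\theta(f,g)\geq 0$ (as already observed, $\theta(\mu:\lambda)\geq\binom{\omega(\mu:\lambda)-1}{2}$). For the upper bound, I would apply Theorem \ref{HDKrup} to the homogeneous polynomial $\mu f^\sharp+\lambda g^\sharp$ for each $(\mu:\lambda)\in\PP^1_\KK$ and use the identity \eqref{keyeq} to rewrite
\[
m(\mu:\lambda)-1+\omega(\mu:\lambda)+\theta(\mu:\lambda)=\dim_\KK\ker\Rc(\mu f^\sharp+\lambda g^\sharp).
\]
Since $r=f/g$ is non-composite, $\mu f^\sharp+\lambda g^\sharp$ is irreducible for generic $(\mu:\lambda)$, so this dimension vanishes off the finite set $\sigma(f,g)$. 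Summing over $\PP^1_\KK$ gives
\[
m(f,g)+\omega(f,g)+\theta(f,g)=\sum_{(\mu:\lambda)\in\PP^1_\KK}\dim_\KK\ker\Rc(\mu f^\sharp+\lambda g^\sharp).
\]

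The heart of the argument exploits the $\KK$-linearity of $\Rc$: setting $A:=\Rc(f^\sharp)$ and $B:=\Rc(g^\sharp)$ we have $\Rc(\mu f^\sharp+\lambda g^\sharp)=\mu A+\lambda B$, a pencil of linear maps $E\to\KK[X,Y,Z]_{2d-3}$. A direct count gives $\dim_\KK E=d^2-1$, since $\KK[X,Y,Z]_{d-1}^2$ has dimension $d(d+1)$ and the constraint $Z\mid XG+YH$ is a surjective linear condition onto $\KK[X,Y]_d$, of codimension $d+1$. Fixing bases, $\mu A+\lambda B$ becomes a rectangular matrix with entries linear homogeneous in $(\mu,\lambda)$, and because the generic kernel is zero one can extract a $(d^2-1)\times(d^2-1)$ submatrix $M$ whose determinant $D(\mu,\lambda)$ is a nonzero binary form of degree exactly $d^2-1$. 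At each $(\mu_0:\lambda_0)\in\PP^1_\KK$, the corank of $M(\mu_0,\lambda_0)$ dominates $\dim_\KK\ker(\mu_0 A+\lambda_0 B)$ (a submatrix drops rank by at least as much as the ambient matrix), and an elementary column reduction yields that the multiplicity of $(\mu_0:\lambda_0)$ as a zero of $D$ is at least this corank: choosing a constant change of basis sending vectors spanning $\ker M(\mu_0,\lambda_0)$ to standard basis vectors makes the corresponding columns of $M$ vanish at the point, so each is a scalar multiple of the linear form $\lambda_0\mu-\mu_0\lambda$, and $D$ is divisible by the required power. Since the multiplicities of the zeros of the degree-$(d^2-1)$ binary form $D$ sum to $d^2-1$, we obtain $\sum_{(\mu:\lambda)}\dim_\KK\ker(\mu A+\lambda B)\leq d^2-1$, which combined with the previous identity finishes the proof.

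The step I expect to be the main obstacle is this last order-of-vanishing comparison for $D$ at a rank-drop point of $M$; the argument is classical, but requires care both in performing the column reduction with a constant (scalar) matrix and in tracking the local uniformizer correctly on $\PP^1_\KK$ rather than dehomogenizing to an affine chart.
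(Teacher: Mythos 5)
Your proof is correct and follows essentially the same route as the paper: reduce via Theorem~\ref{HDKrup} and \eqref{keyeq} to summing $\dim_\KK\ker\Rc(\mu f^\sharp+\lambda g^\sharp)$, represent the pencil of Ruppert maps by a rectangular pencil of matrices with $d^2-1$ columns, and bound the sum of kernel dimensions by the degree of a nonvanishing maximal minor. The only cosmetic difference is that you fix a single generically nonsingular $(d^2-1)\times(d^2-1)$ submatrix and argue directly on its determinant, whereas the paper takes $\Spect(U,V)$ to be the GCD of all maximal minors and invokes the vanishing-order lower bound as a ``well-known property of characteristic polynomials''; your column-reduction argument supplies exactly the missing justification for that step.
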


\begin{proof} For all $(\mu:\lambda) \in \PP^1_\KK$, consider the linear map 
	$$\Rc(\mu f^\sharp +vg^\sharp)=\mu \Rc(f^\sharp)+v\Rc(g^\sharp)$$
	and its matrix
	$${\rm M}(\mu f^\sharp +vg^\sharp)=\mu {\rm M}(f^\sharp)+v{\rm M}(g^\sharp),$$
	where arbitrary bases for the $\KK$-vector spaces $E$ and $\KK[X,Y,Z]_{2d-3}$ have been chosen.
	
	They form a pencil of matrices that has $d^2-1$ columns and more rows.
	We define the polynomial $\Spect(U,V) \in \KK[U,V]$ as the greatest common divisor of all the $(d^2-1)$-minors of the matrix 
	\begin{equation}\label{Mfg}
	U {\rm M}(f^\sharp)+V {\rm M}(g^\sharp).	
	\end{equation}
	It is a homogeneous polynomial of degree $\leq d^2-1$, since each entry of \eqref{Mfg} is a linear form in $\KK[U,V]$.	
	
% Consider a matrix ${\tt M}(Uf^\sharp+ Vg^\sharp)$ of the map $\Rc(U f^\sharp + V g^\sharp)$; it has $d^2-1$ columns and $(d-1)(2d-1)$ rows, hence more rows than columns. We denote by $(P_{t}(U,V))_{t\in T}$, resp.~$(M_{t}(U,V))_{t\in T}$, the collection of all the $(d^2-1)$-minors, resp.~the $(d^2-1)$-square sub-matrices, of ${\tt M}(Uf^\sharp+ Vg^\sharp)$. The $P_t(U,V)$'s are homogeneous polynomials in $\KK[U,V]$ and we define the homogeneous polynomial $\Spect(U,V)$ as their greatest common divisor.  

First, notice that $\Spect(U,V)$ is nonzero. Indeed, 
since $r=f/g$ is reduced and non-composite, the spectrum $\sigma(f,g)$ is finite and hence
there exists \mbox{$(\mu:\lambda) \notin  \sigma(f,g)$}. 
By Theorem \ref{HDKrup}, it follows that $\ker {\rm M}(\mu f^\sharp+ \lambda g^\sharp) =\{0\}$ and therefore that at least one of the $(d^2-1)$-minors of \eqref{Mfg} is nonzero since it has to be nonzero after the specializations of $U$ to $\mu$ and $V$ to $\lambda$.

Now, let $(\mu:\lambda) \in \sigma(f,g)$. By Theorem \ref{HDKrup}
\begin{equation}%\label{dimker}
\dim \ker {\rm M}(\mu f^\sharp+ \lambda g^\sharp) = m(\mu:\lambda)-1+ \omega(\mu:\lambda) + \theta(\mu:\lambda)>0.	
\end{equation}
Therefore, $(\mu:\lambda)$ is a root of $\Spect(U,V)$. Moreover, by a well-known property of characteristic polynomials, $(\mu:\lambda)$ is a root of $\Spect(U,V)$ of multiplicity at least 
$$  m(\mu:\lambda)-1+ \omega(\mu:\lambda) + \theta(\mu:\lambda).$$
 Summing all these multiplicities over all the elements in the spectrum $\sigma(f,g)$, we obtain the quantity $m(f,g)+\omega(f,g)+\theta(f,g)$. It is bounded above by $d^2-1$ because $\Spect(U,V)$ is a polynomial of degree less or equal to $d^2-1$.
\end{proof}

Observe that the term $m(f,g)+\omega(f,g)+\theta(f,g)$  depends quadra\-ti\-cal\-ly on the degrees and on the multiplicities of the irreducible components of the reducible curves in the pencil $\mu f^\sharp +\lambda g^\sharp$. This has to be compared with the bound $d^2-1$  which depends quadratically on the total degree $d$ of the pencil.

\medskip

As mentioned earlier, the inequality $\rho(f,g)\leq d^2-1$ has been proved in \cite{Lo,Vi}. This bound is known to be reached only for $d=1,2,3$ and several authors raised the question of the optimality of this bound for an arbitrary degree $d$ (see for instance \cite[Question 1, p.~79]{AHS} or \cite[top of p.~254]{Vi}).
Coming back to the total order of reducibility counting multiplicities, we do not know whether the bound $d^2-1$ given in Theorem \ref{bound} is optimal. Of course, it is optimal for $d=1,2,3$ since this is the case for the bound $\rho(f,g)\leq d^2-1$. Nevertheless, as a consequence of Theorem \ref{bound} we obtain the 
\begin{Cor}
Let $r=f/g \in \KK(X,Y)$ a non-composite reduced rational function of degree $d$. If $\rho(f,g)=d^2-1$ then $\omega(f,g)=0$. 
\end{Cor}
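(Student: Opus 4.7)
The plan is to observe that everything needed is already packaged in Theorem \ref{bound}, and the corollary is extracted by saturating its inequality chain. Under the hypothesis $\rho(f,g)=d^2-1$, the double inequality
$$\rho(f,g)\leq m(f,g)+\omega(f,g)+\theta(f,g)\leq d^2-1$$
from Theorem \ref{bound} forces equality at both ends. So my first step is simply to record
$$m(f,g)+\omega(f,g)+\theta(f,g)=\rho(f,g).$$

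Next, I would use the elementary inequality $n(\mu:\lambda)\leq m(\mu:\lambda)$, which holds term by term because each irreducible factor contributes at least $1$ to the count with multiplicity. Summed over $(\mu:\lambda)\in\PP^1_\KK$, this gives $\rho(f,g)\leq m(f,g)$. Combined with $\omega(f,g)\geq 0$ (obvious from its definition) and $\theta(f,g)\geq 0$ (which follows from the inequality $\theta(\mu,\lambda)\geq\binom{\omega(\mu:\lambda)-1}{2}\geq 0$ stated right before the definition of $\theta(f,g)$), I obtain
$$m(f,g)+\omega(f,g)+\theta(f,g)\ \geq\ m(f,g)\ \geq\ \rho(f,g).$$

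The step from the previous paragraph forces this chain to be a chain of equalities, hence $\omega(f,g)+\theta(f,g)=0$. Since both summands are non-negative, this yields $\omega(f,g)=0$ (and as a bonus, $\theta(f,g)=0$ and $m(f,g)=\rho(f,g)$, i.e.\ every reducible fiber is also reduced).

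There is no genuine obstacle in this argument: it is purely a saturation of the inequalities already proved, and the only non-bookkeeping ingredient is the trivial comparison $n(\mu:\lambda)\leq m(\mu:\lambda)$. The content of the corollary lies entirely in Theorem \ref{bound}; the proof is just extraction.
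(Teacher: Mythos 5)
Your proposal is correct and matches the (implicit) argument in the paper, which states this corollary as an immediate consequence of Theorem~\ref{bound} without a written proof. The key steps you supply — $\rho(f,g)\leq m(f,g)$ from the termwise comparison $n(\mu:\lambda)\leq m(\mu:\lambda)$, together with the nonnegativity of $\omega$ and $\theta$ already noted in the text — are exactly the bookkeeping the authors leave to the reader.
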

\noindent In other words, if there exists a pencil of curves with total order of reducibility equal to $d^2-1$ then it must have all its reducible members scheme-theoretically reduced. 
% In particular, by \cite[Chapitre 2, Corollaire 3.5.6]{J79} this can only happen if the greatest common divisor of the three coefficients of the 2-form $\dd f^\sharp \wedge \dd g^\sharp$ is a constant.

\medskip

 In the same spirit, given a polynomial $f\in \KK[X,Y]$ of degree $d$, one may ask if there exists a sharper bound for the spectrum $m(f):=m(f,1)$ than $d^2-1$. Indeed, as a consequence of a result of Stein \cite{St} (see also \cite{Lo} and \cite{AHS}), such a phenomenon appears when multiplicities of the irreducible factors are not considered; one has $\rho(f)\leq d-1$ (and this bound is reached). As pointed out to us by Dino Lorenzini, it turns out that the later inequality combined with Lemma \ref{Ab} implies that $m(f)\leq 3d-3$. 
  
The technique we used for proving Theorem \ref{bound} allows to show that 
\begin{equation}\label{PreStein}
	m(f) + \omega(f) + \theta(f) \leq d(d-1)/2
\end{equation}
providing $f$ is a non-composite polynomial. It follows from the fact that $\Rc_d(1)$ has rank $d(d-1)/2$, this rank being easy to compute since the linear map $\Rc_d(1)$ sends a couple $(G,H)$ to the difference $\partial_YG-\partial_XH$. We do not know if a bound linear in the degree $d$ holds for the quantity $m(f)+\omega(f) + \theta(f)$.

\medskip

 Although beyond the scope of this paper, we would like to mention that our approach can be directly applied for a collection of polynomials $(f_1,\ldots,f_r)$ rather than a couple of polynomials $(f,g)$. The problem is then to investigate the variety $\mathcal{S}$ of points $(\lambda_1,\ldots,\lambda_r)$ such that the polynomial  $\lambda_1f_1^\sharp+\cdots+\lambda_r f_r^\sharp$ is reducible, assuming that this latter is generically irreducible. As an immediate consequence of our approach, the degree of $\mathcal{S}$ is less or equal to $d^2-1$. Notice that the study of $\mathcal{S}$ has already been considered in \cite{BDN} in arbitrary characteristic.

\medskip

Finally, before closing this section we establish a result similar to Theorem \ref{bound} in the multivariate case. This kind of result is based on a classical use of Bertini's Theorem under the following form.  

\begin{Lem}\label{thm_bertini}
Let 
$$f= \sum_{|\underline e|\leq
d}c_{e_1, \ldots, e_n}X_{1}^{e_1} \ldots X_{n}^{e_n}\in \KK[X_{1},
\ldots ,X_{n}]$$ 
set $|\underline e|= e_1+ \cdots+ e_n$ and 
$$\LL:= \KK(U_1, \ldots, U_n, V_1, \ldots, V_n, W_1,
\ldots, W_n)$$ where $U_1, \ldots, U_n, V_1, \ldots, V_n, W_1,
\ldots, W_n$ are algebraically independent inde\-ter\-mi\-na\-tes. 

Then, the bivariate polynomial $$\tilde {f}(X,Y)= f(U_1X+V_1Y+ W_1,  \ldots,
U_nX+ V_nY+ W_n) \in \LL[X,Y]$$ is irreducible in $\overline{\LL}[X,Y]$ if
and only if $f$ is irreducible in $\KK[X_1,\dots,X_n]$.
\end{Lem}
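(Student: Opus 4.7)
The forward direction is immediate. If $f = gh$ with $g, h \in \KK[X_1,\ldots,X_n]$ non-constant, then $\tilde f = \tilde g \tilde h$ in $\LL[X,Y]$. Because the $U_i$ and $V_i$ are algebraically independent, the leading homogeneous form of $\tilde g$, namely
$$g_{\deg g}(U_1 X + V_1 Y, \ldots, U_n X + V_n Y),$$
is a nonzero polynomial in $\LL[X,Y]$, so $\deg_{X,Y}\tilde g = \deg g \geq 1$, and similarly for $\tilde h$. Thus $\tilde f$ is already reducible in $\LL[X,Y]$, hence in $\overline{\LL}[X,Y]$.

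For the converse, assume $f$ is irreducible in $\KK[X_1,\ldots,X_n]$ and read the substitution geometrically. The ring map $X_i \mapsto U_i X + V_i Y + W_i$ is dual to the universal affine parametrization of a 2-plane in $\mathbb{A}^n$,
$$\pi : \mathbb{A}^2_{\LL} \longrightarrow \mathbb{A}^n_{\LL}, \quad (x,y) \mapsto \bigl(U_i x + V_i y + W_i\bigr)_{i=1}^n,$$
so $\tilde f = f\circ\pi$ cuts out the scheme-theoretic intersection of the irreducible hypersurface $V(f)\subset \mathbb{A}^n_\LL$ with this generic 2-plane. The small cases are direct: when $n = 1$, an irreducible $f$ over the algebraically closed field $\KK$ is linear, so $\tilde f$ is a nonzero affine form; when $n = 2$, the linear part of $\pi$ has generic determinant $U_1 V_2 - U_2 V_1 \in \LL^{\times}$, so $\pi$ is an affine automorphism of $\mathbb{A}^2_\LL$ and pullback by it preserves irreducibility over $\overline{\LL}$.

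The substantive case $n \geq 3$ is exactly the content of Bertini's theorem on generic linear sections, in the universal form proved in \cite[Chapitre 2]{J79}: for an irreducible affine variety $V \subset \mathbb{A}^n_{\overline\KK}$ of dimension at least $2$, the intersection with the generic affine $2$-plane with algebraically independent coefficients is \emph{geometrically} irreducible, i.e., irreducible after base change to $\overline{\LL}$. Applied to $V(f)$, which has dimension $n - 1 \geq 2$, this yields the irreducibility of $\tilde f$ in $\overline{\LL}[X,Y]$.

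The main obstacle is invoking Bertini in exactly this geometric form; irreducibility merely over $\LL$ would not suffice, because the lemma will be combined with Theorem \ref{bound} and the earlier analysis, which all take place over the algebraic closure. The reason truly independent parameters $U_i, V_i, W_i$ are introduced, rather than a single generic $\KK$-point of the Grassmannian, is precisely to force this absolute irreducibility without entering into separability considerations; an alternative, if one wishes to avoid citing the 2-plane version directly, is to iterate the Bertini theorem for generic hyperplane sections, adjoining a fresh independent set of parameters at each step and cutting the dimension from $n - 1$ down to $1$ while preserving geometric irreducibility throughout.
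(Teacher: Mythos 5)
Your argument is correct in outline and follows the route the paper implicitly relies on: the paper's own ``proof'' is a one-line citation to Kaltofen's Lemma~7 in \cite{Ka} (with \cite{Job} offered for background on Bertini's theorem), and what you have written unpacks the geometric content of that citation, reducing the claim to Bertini's theorem for generic affine $2$-plane sections of an irreducible hypersurface. Two small corrections. The reference you give for the universal Bertini theorem should be \cite{Job} (Jouanolou's \emph{Th\'eor\`emes de Bertini et applications}), not \cite{J79}, which is his book on algebraic Pfaff equations and is what the paper cites for the Darboux--Jouanolou bound in Lemma~\ref{Ab}, a different matter. In the $n=2$ case you should also say explicitly that, because $\KK$ is algebraically closed, irreducibility of $f$ in $\KK[X_1,X_2]$ already gives irreducibility in $\overline{\LL}[X_1,X_2]$; then pullback by the affine automorphism $\pi$ of $\mathbb{A}^2_{\overline{\LL}}$ transfers irreducibility to $\tilde f$. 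Your closing remark --- that one truly needs irreducibility over $\overline{\LL}$ rather than merely over $\LL$, and that this is why $U_i,V_i,W_i$ are taken to be independent indeterminates --- is exactly the right point to flag, since the lemma is used in Theorem~\ref{bound_nvar} precisely to match factorization patterns of $\mu f^\sharp+\lambda g^\sharp$ and $\mu\tilde f^\sharp+\lambda\tilde g^\sharp$ over the respective algebraic closures.
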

\begin{proof} See \cite[lemma 7]{Ka}. See also \cite{Job} for a complete treatment of Bertini's Theorem.
\end{proof}

In the following theorem, the quantities $m(f,g)$, $\omega(f,g)$ and $\theta(f,g)$ that we have defined for a rational function $r=f/g$ in two variables are straight\-for\-war\-dly extended to a rational function in several variables, denoting by $X_0$ the homogenizing variable.

\begin{Thm}\label{bound_nvar}
Let $r=f/g \in \KK(X_1,\dots,X_n)$ a non-composite reduced ratio\-nal function of degree $d$. We have 
$$m(f,g) + \omega(f,g) + \theta(f,g) \leq d^2-1.$$
\end{Thm}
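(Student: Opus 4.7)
The plan is to reduce to the bivariate setting of Theorem \ref{bound} by means of a generic linear substitution, as permitted by Bertini's theorem in the form of Lemma \ref{thm_bertini}. Set $\LL := \KK(U_1, V_1, W_1, \ldots, U_n, V_n, W_n)$ and consider the $\LL$-algebra homomorphism
$$\varphi : \LL[X_0, X_1, \ldots, X_n] \longrightarrow \LL[X, Y, Z]$$
defined by $X_0 \mapsto Z$ and $X_i \mapsto U_i X + V_i Y + W_i Z$ for $i = 1, \ldots, n$. Observe that $\varphi$ preserves the degree of homogeneous polynomials, and a direct verification shows that $\varphi(f^\sharp) = \tilde{f}^\sharp$ and $\varphi(g^\sharp) = \tilde{g}^\sharp$, where $\tilde f$ and $\tilde g$ are the bivariate polynomials obtained from $f$ and $g$ as in Lemma \ref{thm_bertini}.

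The key ingredient is a homogeneous version of Lemma \ref{thm_bertini}: if $P \in \KK[X_0, \ldots, X_n]$ is an irreducible homogeneous polynomial, then $\varphi(P)$ is irreducible in $\overline{\LL}[X, Y, Z]$. This is obtained by dehomogenizing $P$ at $X_0 = 1$ (the case $P = X_0$ being trivial since $\varphi(X_0) = Z$), applying Lemma \ref{thm_bertini}, and rehomogenizing the resulting irreducible bivariate polynomial in $Z$. Applying this to each irreducible factor appearing in the factorization \eqref{genfactor}, one obtains, for every $(\mu:\lambda) \in \sigma(f,g) \subset \PP^1_\KK \subset \PP^1_{\overline\LL}$,
$$\mu \tilde{f}^\sharp + \lambda \tilde{g}^\sharp \,=\, \varphi(\mu f^\sharp + \lambda g^\sharp) \,=\, \prod_{i=1}^{n(\mu:\lambda)} \varphi(P_{(\mu:\lambda),i})^{e_{(\mu:\lambda),i}},$$
where each $\varphi(P_{(\mu:\lambda),i})$ is irreducible of the same degree as $P_{(\mu:\lambda),i}$. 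A short elementary calculation confirms that the three quantities $m(\mu:\lambda)$, $\omega(\mu:\lambda)$, $\theta(\mu:\lambda)$ attached to this factorization are no smaller than those attached to the original (they are equal when the $\varphi(P_{(\mu:\lambda),i})$ remain pairwise non-associated, whereas any coalescing leaves $m$ unchanged and can only increase $\omega$ and $\theta$). Summing over $\sigma(f,g) \subset \sigma(\tilde{f},\tilde{g})$ therefore gives
$$m(f,g) + \omega(f,g) + \theta(f,g) \,\leq\, m(\tilde f, \tilde g) + \omega(\tilde f, \tilde g) + \theta(\tilde f, \tilde g).$$

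To conclude, it remains to check that $\tilde r = \tilde f / \tilde g$ is non-composite over $\overline\LL$, so that Theorem \ref{bound} applies to the right-hand side and bounds it by $d^2 - 1$. Since $\sigma(f,g)$ is finite and, for every $(\mu:\lambda) \in \PP^1_\KK \setminus \sigma(f,g)$, the irreducibility of $\mu f^\sharp + \lambda g^\sharp$ together with the homogeneous Bertini statement produces the irreducibility of $\mu \tilde f^\sharp + \lambda \tilde g^\sharp$ in $\overline\LL[X,Y,Z]$, the pencil $\mu \tilde f^\sharp + \lambda \tilde g^\sharp$ admits cofinitely many irreducible specializations in $\PP^1_\KK$; its general element is thus irreducible and $\tilde r$ is non-composite by the Bertini--Krull theorem cited in the Introduction. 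The main obstacle lies precisely in justifying the homogeneous form of Bertini's lemma, in confirming that the passage through $\varphi$ cannot decrease $m + \omega + \theta$ at any point of the spectrum, and in transferring non-compositeness to $\overline\LL$; once these points are settled, the result is a formal consequence of Theorem \ref{bound}.
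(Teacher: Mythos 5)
Your proof is correct and follows the same strategy as the paper: reduce to the bivariate case by the generic linear substitution of Lemma~\ref{thm_bertini}, transfer the factorizations through the homogeneous map $\varphi$, and invoke Theorem~\ref{bound}. You are somewhat more scrupulous than the paper --- which simply asserts $m(f,g)=m(\tilde f,\tilde g)$, $\omega(f,g)=\omega(\tilde f,\tilde g)$, $\theta(f,g)=\theta(\tilde f,\tilde g)$ and applies Theorem~\ref{bound} --- in that you spell out the homogeneous form of Bertini's lemma, observe that any coalescing of factors under $\varphi$ (which in fact does not occur, a generic plane section separating distinct hypersurfaces) can only increase the left-hand side, and explicitly verify non-compositeness of $\tilde f/\tilde g$ over $\overline{\LL}$; these are details the paper leaves implicit, and your version handles them soundly.
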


\begin{proof} Given $(\mu:\lambda) \in \PP^1_\KK$, Lemma \ref{thm_bertini} implies that 
$$\mu f^\sharp + \lambda g^\sharp=\prod_{i=1}^{n(\mu:\lambda)} P_{(\mu:\lambda),i}^{e_{(\mu:\lambda),i}}$$ 
with $P_{(\mu:\lambda),i}$ homogeneous and irreducible in $\KK[X_0,X_1,\dots,X_n]$, if and only if 
$$\mu \tilde{f}^\sharp + \lambda \tilde{g}^\sharp=\prod_{i=1}^{n(\mu:\lambda)} \tilde{P}_{(\mu:\lambda),i}^{e_{(\mu:\lambda),i}}$$ 
with $\tilde{P}_{(\mu:\lambda),i}$ homogeneous irreducible in $\overline{\LL}[X,Y,Z]$. Therefore,
$m(f,g)=m(\tilde{f},\tilde{g})$, $\omega(f,g)=\omega(\tilde{f},\tilde{g})$ and $\theta(f,g)=\theta(\tilde{f},\tilde{g})$. The claimed result then follows from Theorem \ref{bound} applied to the rational function $r=\tilde{f}/\tilde{g} \in \KK(X,Y)$.
\end{proof}

%%%%%%%%%%%%%%%%%%%%%%%%%%%%%%%%%
\section{Exploiting Newton's polygon}\label{newton}

In the previous section we considered rational functions $f/g$ with a certain fixed degree. In this section, we will refine this characterization by considering the Newton's polygons of $f$ and $g$. In this way, we will give an upper bound for the total order of reducibility counting multiplicities $m(f,g)$ that improves the one of Theorem \ref{bound} in many cases. In particular, an example for which this bound is almost reached for an arbitrary degree is presented.

To obtain this upper bound, we will follow a more basic approach than in Section \ref{mainresult}. Indeed, instead of using Theorem \ref{HDKrup} we will exhibit explicit elements in the kernel of a suitable Ruppert's linear map and show that they are linearly independent. This has the advantage to allow us working in non-zero characteristic, but has the disadvantage to provide a bound for the quantity $m(f,g)$ and not $m(f,g)+\omega(f,g)+\theta(f,g)$ as in Theorem \ref{HDKrup}. 

Before going further into details, mention that a bound for the total order of reducibility $\rho(f,g)$ related to the Newton's polygons of $f$ and $g$ is contained in the result of Vistoli \cite[Theorem 2.2]{Vi} since this amounts to homogenize the corresponding pencil of curves over a certain toric variety which is built from the Newton's polygons of $f$ and $g$. The bound provided in \cite{Vi} is then expressed in terms of invariants of this variety and of the pencil of curves that are not easy to make explicit.

\medskip

Recall that $p$ stands for the characteristic of the algebraically closed field $\KK$. We begin with some notations and preliminary materials.

\medskip

Given a polynomial $f(X,Y) \in \KK[X,Y]$, its support is the  set $S_f$ of integer points $(i,j)$ such that the monomial $X^iY^j$ appears in $f$ with a non zero coefficient. The convex hull, in the real space $\RR^2$, of $S_f$ is denoted by $N(f)$ and called the Newton's polygon of $f$. It is contained in the first quadrant of the plane $\RR^2$. \\
Recall that the Minkowski sum $A+B$ of two sets $A$ and $B \in \RR^2$ is the set of all elements $a+b$ with $a\in A$ and $b \in B$. We have the following classical result due to Ostrowski: let $f,f_1,\dots,f_r$ be polynomials in $\KK[X,Y]$ such that $f=f_1\ldots f_r$, then
\begin{equation}\label{Ostrowski} 
N(f)=N(f_1)+\cdots+N(f_r).
\end{equation}

Now, we introduce another polygon.

\begin{Def}
$N^+(f)$ is the smallest convex set that contains $N(f)$ and the origin, and that is bordered by edges having non-positive slopes (horizontal and vertical edges are hence allowed).
\end{Def}

Remark: An equivalent definition of $N^{+}(f)$ is the following: For any integer point $(i,j)\in \NN \times \NN $ we define its boxed Newton Polygon $B^+(i,j)$ to be all integer points in the rectangle with opposite corners $(0,0), (i,j)$. Then $N^+(f)$ is the convex hull of all $B^+(i,j)$ with $(i,j)$ in the support of $f$.\\

As $N(f)$, $N^+(f)$ is also contained in the first quadrant of the plane $\RR^2$. For example, $N^+(XY)$ is the square with vertices $(0,0)$, $(0,1)$, $(1,1)$ and $(1,0)$ and $N^+(X+Y+X^2Y^2)$ is the polygon with vertices $(0,0)$, $(1,0)$, $(2,2)$ and $(0,2)$. Moreover, it will be useful in the sequel to notice that if $f,g \in \KK[X,Y]$ are such that $g$ divides $f$ then clearly $N(f/g) \subset N^+(f)$. \\

The notion of total degree of a polynomial $f \in \KK[X,Y]$ can be refined in many ways in the sparse context. For instance, if $f(X,Y)=\sum_{i,j} f_{i,j} X^iY^j$ in $\KK[X,Y]$, given a couple $(a,b) \in \ZZ^2$ the $(a,b)$-weighted degree, or simply weighted degree, of $f$ is defined by 
$$d_{a,b}(f)= \max_{(i,j)\in \NN^2} \{ ai+bj \mid f_{i,j} \neq 0 \}.$$
Thus, the total degree of a polynomial $f$ is nothing but $d_{1,1}(f)$ and the degree of $f$ with respect to the variable $X$, resp.~$Y$,  corresponds to $\deg_{1,0}(f)$, resp.~$\deg_{0,1}(f)$. 

If $\mathcal{E}$ is an edge of a given convex set $\mathcal{N}$, denote by $a_{\mathcal{E}}X +b_{\mathcal{E}}Y=c_{\mathcal{E}}$ one of its integer equation. Then, it is clear that $d_{a_{\mathcal{E}},b_{\mathcal{E}}}(m)=d_{a_{\mathcal{E}},b_{\mathcal{E}}}(n)$ if $m, n \in \mathcal{E}$, and that 
$d_{a_{\mathcal{E}},b_{\mathcal{E}}}(m) \neq  d_{a_{\mathcal{E}},b_{\mathcal{E}}}(n)$
if $m \not \in \mathcal{E}$, $n \in \mathcal{E}$. In what follows we will use this remark for particular edges that we will call \emph{good edges}.

\begin{Def}\label{good} Suppose given a convex set $\mathcal{N}$ in the first quadrant of the plane. An edge $\mathcal{E}$ of $\mathcal{N}$ is called a good edge if the two following conditions hold:
\begin{itemize}
\item there exists $(a_{\mathcal{E}},b_{\mathcal{E}}) \in \NN^2\setminus (0,0)$ and $c_{\mathcal{E}} \in \NN$ such that $a_{\mathcal{E}}X+b_{\mathcal{E}}Y=c_{\mathcal{E}}$ is an equation of $\mathcal{E}$,
\item if $n \in \mathcal{N}$ , $n \not \in \mathcal{E}$ and $m  \in \mathcal{E}$ then $0\leq d_{a_{\mathcal{E}},b_{\mathcal{E}}}(n)<d_{a_{\mathcal{E}},b_{\mathcal{E}}}(m)$.
\end{itemize}

\end{Def}

Remarks: A good edge is a vertical edge or an edge with a non positive slope such that the convex set is below or to the left  of this edge.\\
A good edge does not always exist. Consider for example the triangle formed by $(1,0)$, $(2,2)$ and $(0,1)$.\\

We are now ready to state the main result of this section.

\begin{Thm}\label{bound_sparse}
Let $\mathcal{N}$ be a convex set in $\RR^2$. Denote by $\mathfrak{p}$ its number of  integral points and by 
$\mathfrak{p}_X$, resp.~$\mathfrak{p}_Y$, the number of points in $\mathcal{N}$ lying on the $X$-axis, resp.~$Y$-axis.
If $\mathcal{N}$ possesses a good edge $\mathcal{E}$, then $\mathfrak{p}_{\mathcal{E}}$ stands for the number of integral points in $\mathcal{N}$ lying on $\mathcal{E}$; otherwise set $\mathfrak{p}_{\mathcal{E}}=0$.

\medskip

Suppose given a non-composite reduced rational function $r=f/g \in \KK(X,Y)$ of degree $d$, assume that $\mathcal{N} \subseteq N \left( (1+X+Y)^d \right)$ and that the characteristic $p$ of $\KK$ is such that $p=0$ or  $p>d(d-1)$. 

\begin{itemize}
	
    \item If $N(f)\subset \mathcal{N}$ and $N(g) \subset \mathcal{N}$ then
	\begin{equation}\label{theq1}
	\rho(f,g) \leq 2\mathfrak{p}-\mathfrak{p}_X -\mathfrak{p}_Y- \mathfrak{p}_{\mathcal{E}}+\kappa.	
	\end{equation}	
	
	\item If $N^+(f)\subset \mathcal{N}$ and $N^+(g) \subset \mathcal{N}$ then
	\begin{equation}\label{theq2}
		m(f,g) \leq 2\mathfrak{p}-\mathfrak{p}_X -\mathfrak{p}_Y - \mathfrak{p}_{\mathcal{E}}+\kappa.			\end{equation}
	
	\item If $N(f)\subset \mathcal{N}$, $N(g) \subset \mathcal{N}$ and $(-g(0,0):f(0,0)) \not \in \sigma(f,g)$ then
	\begin{equation}\label{theq3}
		m(f,g) \leq 2\mathfrak{p}-\mathfrak{p}_X -\mathfrak{p}_Y - \mathfrak{p}_{\mathcal{E}}+\kappa.			\end{equation}
		
\end{itemize}
where $\kappa=\max(e_\infty-1,0)$ with $e_\infty$ the multiplicity (possibly 0) of the line at infinity $\{ Z=0 \}$ in the pencil of curves $\mu f^{\sharp}+\lambda g^{\sharp}$.
\end{Thm}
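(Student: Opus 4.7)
\smallskip

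The strategy is to adapt the pencil-of-matrices argument of Theorem \ref{bound} to the Newton polygon setting. Since we now allow positive characteristic, Theorem \ref{HDKrup} is unavailable, so the lower bound on the kernel dimension of a sparse Ruppert map must be established by \emph{explicit} construction of kernel elements producing $m(\mu:\lambda)-1$ (resp.\ $n(\mu:\lambda)-1$) elements per reducible fiber.

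First I would introduce a sparse variant $\Rc_\mathcal{N}$ of Ruppert's map, whose source $E_\mathcal{N}$ consists of pairs $(G,H)$ whose Newton supports are constrained by $\mathcal{N}$ and which satisfy the Euler-type condition $Z\mid XG+YH$. Ostrowski's identity \eqref{Ostrowski} ensures that $\Rc_\mathcal{N}(\mu f^\sharp+\lambda g^\sharp)$ is well-defined on $E_\mathcal{N}$ under the Newton polygon hypotheses of the theorem and depends linearly on $(\mu,\lambda)$. A careful lattice-point count then yields $\dim E_\mathcal{N}\leq 2N-N_X-N_Y-N_\mathcal{E}+\kappa$: $2N$ for interior lattice points in two copies, $-(N_X+N_Y)$ from the Euler constraint which strips the axis monomials, $-N_\mathcal{E}$ from an additional leading-coefficient cancellation along the good edge $\mathcal{E}$ (the argument being analogous to Lemma \ref{defrup} but with the $(a_\mathcal{E},b_\mathcal{E})$-weighted degree in place of the total degree), and $+\kappa$ correcting for the $Z^{e_\infty}$ factor at infinity.

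For each $(\mu:\lambda)\in \sigma(f,g)$ with $F:=\mu f^\sharp+\lambda g^\sharp=\prod_{i=1}^r P_i^{e_i}$ I would exhibit two families of elements of $\ker\Rc_\mathcal{N}(F)$. The logarithmic family consists of
$$L_j:=d_j\Big(\tfrac{F}{P_1}\partial_X P_1,\tfrac{F}{P_1}\partial_Y P_1\Big)-d_1\Big(\tfrac{F}{P_j}\partial_X P_j,\tfrac{F}{P_j}\partial_Y P_j\Big),\ \ j=2,\ldots,r,$$
which lie in $E_\mathcal{N}$ by Euler's relation applied to $P_1$ and $P_j$, and provide $n(\mu:\lambda)-1$ kernel elements. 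For the bounds \eqref{theq2} and \eqref{theq3} on $m(f,g)$, I would add, for each $i$ with $e_i\geq 2$ and each $k=1,\ldots,e_i-1$, a nilpotent element arising from the exact form $d(1/P_i^k)=-k\,dP_i/P_i^{k+1}$: namely a suitable correction of $(F/P_i^{k+1}\partial_X P_i,F/P_i^{k+1}\partial_Y P_i)$ by a linear combination of the $L_j$ that enforces $Z\mid XG+YH$. This supplies $\sum(e_i-1)$ further elements, for a total of $m(\mu:\lambda)-1$; linear independence follows from pole-order analysis along the distinct components $\{P_i=0\}$. The weaker hypothesis in case (c) is compensated by the assumption $(-g(0,0):f(0,0))\notin\sigma(f,g)$, which prevents the fiber through the origin of the Newton polygon from being reducible and so ensures the nilpotent corrections stay in $E_\mathcal{N}$.

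The conclusion is then the characteristic-polynomial argument of Theorem \ref{bound}: the matrix pencil $U\,M(f^\sharp)+V\,M(g^\sharp)$ of $\Rc_\mathcal{N}$ has at most $\dim E_\mathcal{N}$ columns, its maximal minors have greatest common divisor $\Spect_\mathcal{N}(U,V)$ (nonzero since $r$ is non-composite, so some fiber of the pencil is irreducible and makes $\Rc_\mathcal{N}$ injective) of homogeneous degree at most $\dim E_\mathcal{N}$, and each $(\mu:\lambda)\in\sigma(f,g)$ contributes a zero of multiplicity at least $\dim\ker\Rc_\mathcal{N}(F)\geq m(\mu:\lambda)-1$ (or $n(\mu:\lambda)-1$) by our construction. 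Summing over $\sigma(f,g)$ yields the three claimed inequalities. The hard part will be the construction of the nilpotent elements inside $E_\mathcal{N}$: the corrections enforcing $Z\mid XG+YH$ must not spoil the Newton polygon constraint, and this is precisely the step in which the hypothesis $p=0$ or $p>d(d-1)$ is used, to keep the Euler coefficients, the multiplicities $k$, and the degrees $d_i$ simultaneously invertible throughout the construction.
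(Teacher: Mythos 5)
Your overall strategy (explicit kernel elements in a sparse Ruppert map, then the characteristic-polynomial count) matches the paper, but there are two concrete errors in how you set it up that make the construction fail as written.

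First, the coefficients of your logarithmic family are wrong. You take
$L_j=d_j(\frac{F}{P_1}\partial_X P_1,\ldots)-d_1(\frac{F}{P_j}\partial_X P_j,\ldots)$
with $d_j,d_1$ the ordinary degrees, so that $Z\mid XG+YH$ holds by the usual Euler relation. But then $L_j$ in general does \emph{not} satisfy the good-edge condition. If $a_{\mathcal{E}}X+b_{\mathcal{E}}Y=c_{\mathcal{E}}$ is the good edge, the top weighted-degree part of $a_{\mathcal{E}}XG+b_{\mathcal{E}}YH$ is, by the weighted Euler identity $a_{\mathcal{E}}X\partial_X f_i^{\mathrm{top}}+b_{\mathcal{E}}Y\partial_Y f_i^{\mathrm{top}}=d_{a_{\mathcal{E}},b_{\mathcal{E}}}(f_i)\,f_i^{\mathrm{top}}$, proportional to $\bigl(d_j\, d_{a_{\mathcal{E}},b_{\mathcal{E}}}(P_1)-d_1\, d_{a_{\mathcal{E}},b_{\mathcal{E}}}(P_j)\bigr)f^{\mathrm{top}}$, which is not zero unless the weighted and total degrees happen to be proportional. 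The paper therefore replaces $d_1,d_j$ by the \emph{weighted} degrees $d_{a_{\mathcal{E}},b_{\mathcal{E}}}(f_1),d_{a_{\mathcal{E}},b_{\mathcal{E}}}(f_j)$ (Definition~\ref{kerdef}), and correspondingly works with the affine polynomials $f,g$ and drops the $Z\mid XG+YH$ requirement from $E_{\mathcal{N}}$ altogether; only the Newton-support and good-edge weighted-degree conditions are imposed. This is also why the nilpotent elements $\bigl(\frac{f}{f_i^k}\partial_X f_i,\frac{f}{f_i^k}\partial_Y f_i\bigr)$, $k\geq 2$, need no correction by the $L_j$: for $k\geq 2$ they drop strictly below the maximal weighted degree, so the good-edge constraint is automatic (Proposition~\ref{inclusion}(iii)), and your extra correction step, which would have to be checked to preserve the Newton constraint, is unnecessary.

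Second, the $+\kappa$ does not belong in $\dim E_{\mathcal{N}}$. The dimension count gives exactly $\dim_\KK E_{\mathcal{N}}=2N-N_X-N_Y-N_{\mathcal{E}}$, independently of the pencil; $\kappa=\max(e_\infty-1,0)$ depends on one particular member of the pencil and cannot appear in a dimension formula for a fixed vector space. In the paper $\kappa$ enters on the \emph{kernel} side: because one works affinely, a fiber $\mu f^\sharp+\lambda g^\sharp=Z^{e_\infty}\prod P_i^{e_i}$ contributes only the factors of $\mu f+\lambda g$, so the explicit kernel elements account for $n(\mu:\lambda)-1-\kappa$ (resp.\ $m(\mu:\lambda)-1-\kappa$) rather than $n(\mu:\lambda)-1$ when $\deg(\mu f+\lambda g)<d$, and the $\kappa$ is then added back when summing over $\sigma(f,g)$. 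Your placement of $\kappa$ in the dimension bound would, if anything, weaken the degree bound on $\Spect(U,V)$ with no corresponding gain on the other side.
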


Before proceeding with the proof of this theorem, we comment it and illustrate it through three examples.
First, consider the dense case which corresponds to the situation studied in Section \ref{mainresult}. Here, we have $$\mathcal{N}=N(f)=N^+(f)=N(g)=N^+(g)=N((1+X+Y)^d)$$ 
and therefore $\mathfrak{p}=(d+2)(d+1)/2$, $\mathfrak{p}_X=\mathfrak{p}_Y=\mathfrak{p}_{\mathcal{E}}=d+1$, the good edge $\mathcal{E}$ being the diagonal joining the vertices $(0,d)$ and $(d,0)$. Moreover, since we are in the dense case, a linear change of coordinates leaves invariant $N(f)$, $N(g)$,  $\rho(f,g)$ and $m(f,g)$. Thus  we can assume that $\kappa=0$, that is to say that the line at infinity is not a factor of any member of pencil of curves $\mu f^{\sharp}+\lambda g^{\sharp}$. It follows that we obtain the expected bounds $\rho(f,g)\leq d^2-1$ and $m(f,g)\leq d^2-1$.

\medskip

Our next example, taken from \cite[Remark 5]{Lo}, is to show that the bound \eqref{theq2} is almost reached for an arbitrary degree $d$. 
Indeed, set  $$f(X,Y)=X(X+1)\cdots(X+d-2)Y+X, \ \ g(X,Y)=1.$$ 
It is not hard to check that $r=f/g$ is non-composite (see \cite[Remark 5]{Lo}) and  that $m(f,g)\geq 2d-2$. Now, defining $\mathcal{N}=N^+(f+g)$ which is a rectangle with vertices $(0,0)$, $(d-1,0)$, $(d-1,1)$ and $(0,1)$, we have $\mathfrak{p}=2d$, $\mathfrak{p}_X=d$, $\mathfrak{p}_Y=2$. Furthermore, we choose the horizontal good edge corresponding 
to $a_{\varepsilon}=0$, $b_{\varepsilon}=1$, $c_{\varepsilon}=1$ and obtain $\mathfrak{p}_{\mathcal{E}}=d$. Since
$\kappa=d-1$, the bound given in \eqref{theq2} is equal to $2d-1$ and we obtain 
$$ 2d-2\leq m(f,g) \leq 2d-1.$$

\medskip

Finally, our last example is to justify why we chose to state \eqref{theq3} despite the technical hypothesis requiring that the projective point \mbox{$(-g(0,0):f(0,0))$} does not belong to the spectrum of $f/g$. Consider the example
\begin{eqnarray*}
f(X,Y) & = & a_0+a_1XY+a_2X^2Y^2+a_3X^3Y^2+a_4X^2Y^3 \\ 
g(X,Y) & = & b_0+b_1XY+b_2X^2Y^2+b_3X^3Y^2+b_4X^2Y^3	
\end{eqnarray*}
where the coefficients $a_i$'s and $b_j$'s are all assumed to be nonzero and such that the above mentioned hypothesis is satisfied. We have $N(g)=N(f)$ and it is clear that $N(f)\subsetneq N^+(f)$. Taking $\mathcal{N}=N^+(f)$ and defining the good edge $\mathcal{E}$ as, for instance, the top horizontal edge of $N^+(f)$, we get $\mathfrak{p}=15$, $\mathfrak{p}_x=4$, $\mathfrak{p}_y=4$, $\mathfrak{p}_{\mathcal{E}}=3$, $d=5$. Therefore, \eqref{theq2}  yields
\begin{equation}\label{comp}
	m(f,g) \leq 2\mathfrak{p} -\mathfrak{p}_x-\mathfrak{p}_y- \mathfrak{p}_{\mathcal{E}}=19<d^2-1=24.
\end{equation} 
Now, choosing 
$\mathcal{N}=N(f)$ there is only one choice for the good edge $\mathcal{E}$ and we obtain  $\mathfrak{p}=5$, $\mathfrak{p}_x=1$, $\mathfrak{p}_y=1$, $\mathfrak{p}_\mathcal{E}=2$, $d=5$. Consequently, \eqref{theq3} gives $m(f,g) \leq  10$, to be compared with \eqref{comp}. The following picture shows the different polytopes involved in this example.
\begin{center}
\setlength{\unitlength}{.65cm}
\begin{picture}(10,10)
\put(0.5,2){$0$}
\put(1,2){\vector(1,0){7}}
\put(1,1.5){\vector(0,1){7}}
\put(0.5,8){$Y$}
\put(8.5,1.5){$X$}
\put(1,2){\circle*{.2}}
\put(2,3){\circle*{.2}}
\put(3,4){\circle*{.2}}
\put(4,4){\circle*{.2}}
\put(3,5){\circle*{.2}}
\put(3,2){\circle*{.1}}
\put(2,2){\circle*{.1}}
\put(4,2){\circle*{.1}}
\put(1,3){\circle*{.1}}
\put(1,4){\circle*{.1}}
\put(1,5){\circle*{.1}}
\put(1,6){\circle*{.1}}
\put(1,7){\circle*{.1}}
\put(2,4){\circle*{.1}}
\put(2,5){\circle*{.1}}
\put(2,6){\circle*{.1}}
\put(3,2){\circle*{.1}}
\put(3,3){\circle*{.1}}
\put(4,3){\circle*{.1}}
\put(5,3){\circle*{.1}}
\put(5,2){\circle*{.1}}
\put(6,2){\circle*{.1}}
\put(1,5){\line(1,0){2}}
\put(4,4){\line(0,-1){2}}
\multiput(1,7)(.1,-.1){50}{\circle*{.05}}
\thicklines\put(1,2){\line(3,2){3}}
\thicklines\put(1,2){\line(2,3){2}}
\thicklines\put(4,4){\line(-1,1){1}}
\end{picture}
\end{center}

We now turn to the proof of Theorem \ref{bound_sparse}. We begin with the following preliminary definition and result.
\begin{Def}\label{kerdef}
Let $f(X,Y) \in \KK[X,Y]$, let $(a_{\mathcal{E}},b_{\mathcal{E}}) \in \ZZ^2$ and let $f=f_1^{e_1}\cdots f_r^{e_r}$ be a factorization of $f$ where each polynomial $f_i$ is irreducible. For all $i=2,\dots,r$, we set
\begin{eqnarray*}
\mathcal{G}_i^{(1)} & = & -  d_{a_{\mathcal{E}},b_{\mathcal{E}}}(f_i)     \dfrac{f}{f_1}\partial_Xf_1+ d_{a_{\mathcal{E}},b_{\mathcal{E}}}(f_1)    \dfrac{f}{f_i}\partial_Xf_i \\
\mathcal{H}_i^{(1)} & = & - d_{a_{\mathcal{E}},b_{\mathcal{E}}}(f_i)    \dfrac{f}{f_1}\partial_Yf_1+  d_{a_{\mathcal{E}},b_{\mathcal{E}}}(f_1)     \dfrac{f}{f_i}\partial_Yf_i	
\end{eqnarray*}
and for all $i=1,\dots,r$ and $k=2,\dots,e_i$ we set
$$\mathcal{G}_i^{(k)}=\dfrac{f}{f_i^k}\partial_Xf_i, \  \ \mathcal{H}_i^{(k)}= \dfrac{f}{f_i^k}\partial_Yf_i.$$
\end{Def}

\begin{Prop}\label{inclusion}
	Let $f(X,Y) \in \KK[X,Y]$ be a polynomial of degree $d$, let $f=f_1^{e_1}\cdots f_r^{e_r}$ be a factorization of $f$ where each polynomial $f_i$ is irreducible and assume that the characteristic $p$ of $\KK$ is such that $p=0$ or $p>d$. 
\begin{itemize}
	
	\item[(i)] For all $i=2,\ldots,r$ and all $(a_{\mathcal{E}},b_{\mathcal{E}}) \in \ZZ^2$, $$N\Big(X \mathcal{G}_i^{(1)}\Big) \subset N(f) \textrm{ and } N\Big(Y\mathcal{H}_i^{(1)}\Big)\subset N(f) .$$

	\item[(ii)]  For all $i=1,\dots r$ and all $k=2,\dots, e_i$,
	 $$N\Big(X \mathcal{G}_i^{(k)}\Big) \subset N^+(f)\textrm{ and } N\Big(Y\mathcal{H}_i^{(k)}\Big)\subset N^+(f).$$
Furthermore, if $f(0,0)\neq 0$ then for all $i=1,\dots r$ and all $k=2,\dots, e_i$,
	 $$N\Big(X \mathcal{G}_i^{(k)}\Big) \subset N(f)\textrm{ and } N\Big(Y\mathcal{H}_i^{(k)}\Big)\subset N(f).$$

\item[(iii)] If ${\mathcal{E}}$ is a good edge of $N(f)$ with equation $a_{\mathcal{E}}X+b_{\mathcal{E}}Y=c_{\mathcal{E}}$, then for all $i=1,\dots r$ and all $k=1,\dots, e_i$, 
$$d_{a_{\mathcal{E}},b_{\mathcal{E}}}\Big(a_{\mathcal{E}} X \mathcal{G}_i^{(k)} + b_{\mathcal{E}} Y \mathcal{H}_i^{(k)}\Big) \leq d_{a_{\mathcal{E}},b_{\mathcal{E}}}(f)-1.$$

\item[(iv)] The  $(\sum_{i=1}^r e_i)-1$ elements
$$\left(
\mathcal{G}_i^{(k)},\mathcal{H}_i^{(k)}
\right), \ i=1,\ldots,r, \ k=1,\ldots,e_i, \ (i,k)\neq(1,1)$$ 
are $\KK$-linearly independent.

\end{itemize}
\end{Prop}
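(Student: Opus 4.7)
My plan relies on Ostrowski's multiplicativity \eqref{Ostrowski} together with the elementary fact that $N(X\partial_X h) \subseteq N(h)$ and $N(Y\partial_Y h) \subseteq N(h)$ for any $h \in \KK[X,Y]$: differentiation strips monomials from the support, while multiplying back by $X$ or $Y$ re-injects them into the support of $h$. For the weighted-degree statement in (iii), the right tool is the weighted Euler operator $D := a_{\mathcal{E}} X\partial_X + b_{\mathcal{E}} Y\partial_Y$, which multiplies $X^sY^t$ by $a_{\mathcal{E}} s + b_{\mathcal{E}} t$; writing $w(h) := d_{a_{\mathcal{E}},b_{\mathcal{E}}}(h)$ and $h_{\mathcal{E}}$ for the weighted-top component of $h$, the weighted-top form of $D(h)$ is $w(h)\cdot h_{\mathcal{E}}$, and hence it is nonzero as soon as $p \nmid w(h)$.

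For (i) I would rewrite $X\,\frac{f}{f_j}\partial_X f_j = \frac{f}{f_j}\cdot(X\partial_X f_j)$ for $j \in \{1,i\}$; Ostrowski then yields $N(f/f_j) + N(X\partial_X f_j) \subseteq N(f/f_j) + N(f_j) = N(f)$, and analogously for $Y$. Both summands of $X\mathcal{G}_i^{(1)}$ and of $Y\mathcal{H}_i^{(1)}$ therefore lie in $N(f)$. For (ii), the same identity gives $N(X\mathcal{G}_i^{(k)}) \subseteq N(f/f_i^k) + N(f_i) = N(f/f_i^{k-1})$. Testing the defining half-planes $aX + bY \leq d_{a,b}(f)$ of $N^+(f)$ with $(a,b) \in \NN^2$, Ostrowski gives $d_{a,b}(f/f_i^{k-1}) = d_{a,b}(f) - (k-1)d_{a,b}(f_i) \leq d_{a,b}(f)$, so $N(f/f_i^{k-1}) \subseteq N^+(f)$. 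When $f(0,0) \neq 0$, the origin lies in $N(f_i)$, hence $N(f_i) \subseteq k\,N(f_i)$ in the Minkowski sense, and the inclusion sharpens to $N(X\mathcal{G}_i^{(k)}) \subseteq N(f/f_i^k) + k\,N(f_i) = N(f)$.

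For (iii) and $k \geq 2$, $a_{\mathcal{E}} X\mathcal{G}_i^{(k)} + b_{\mathcal{E}} Y\mathcal{H}_i^{(k)} = (f/f_i^k)\,D(f_i)$ has weighted degree at most $w(f) - (k-1)w(f_i) \leq w(f) - 1$ provided $w(f_i) \geq 1$; the degenerate case $w(f_i) = 0$ forces $f_i$ to involve only the variable dual to the zero coordinate of $(a_{\mathcal{E}},b_{\mathcal{E}})$, and the expression is then identically zero. For $k = 1$, factoring out $f/(f_1 f_i)$, the two weighted-top parts $-w(f_i)\cdot w(f_1)\,f_{i,\mathcal{E}} f_{1,\mathcal{E}}$ and $w(f_1)\cdot w(f_i)\,f_{1,\mathcal{E}} f_{i,\mathcal{E}}$ cancel exactly, giving the claimed strict drop. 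For (iv), I would divide a purported dependence $\sum c_i^{(k)} (\mathcal{G}_i^{(k)},\mathcal{H}_i^{(k)}) = 0$ by $f$ to obtain the rational-$1$-form identity
\begin{multline*}
\sum_{k \geq 2} c_1^{(k)}\frac{\dd f_1}{f_1^k} + \sum_{i \geq 2} c_i^{(1)}\!\left(-w(f_i)\frac{\dd f_1}{f_1} + w(f_1)\frac{\dd f_i}{f_i}\right) \\ + \sum_{i \geq 2,\, k \geq 2} c_i^{(k)}\frac{\dd f_i}{f_i^k} = 0.
\end{multline*}
Principal parts along the pairwise distinct divisors $\{f_j = 0\}$ decouple: the higher-order poles give $c_j^{(k)} = 0$ for $k \geq 2$, the simple poles for $j \geq 2$ give $c_j^{(1)} w(f_1) = 0$, and the simple-pole relation along $\{f_1 = 0\}$ is then automatic. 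Under $p > d$, and upon re-indexing so that $w(f_1) \neq 0$ as an integer (possible as soon as at least one factor has positive weight), $w(f_1)$ is invertible in $\KK$, so every $c_i^{(k)}$ vanishes.

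The main obstacle is controlling positive-characteristic cancellation in (iii) and (iv): ensuring that the weights $w(f_i)$ remain invertible in $\KK$. The numerical hypothesis $p > d(d-1)$ used in Theorem \ref{bound_sparse} is calibrated precisely to dominate the worst-case estimate $w(f_i) \leq \deg(f_i)\max(a_{\mathcal{E}},b_{\mathcal{E}}) \leq (d-1)d$ for proper irreducible factors supported in subpolytopes of $N((1+X+Y)^d)$, while the degenerate cases $a_{\mathcal{E}} b_{\mathcal{E}} = 0$ are dispatched by noting that the expressions at stake vanish outright.
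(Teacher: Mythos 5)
Parts (i)--(iii) follow essentially the same route as the paper: Ostrowski's multiplicativity of Newton polygons, the observation $N(X\partial_X h)\subseteq N(h)$, and the weighted Euler relation at the $\mathcal{E}$-top. Your proof of (ii) is a small variant (you use $N(f_i)\subseteq k\,N(f_i)$ when the origin is a lattice point of $N(f_i)$, the paper uses $N(f/f_i^{k-1})\subseteq N(f/f_i^{k-1})+N(f_i^{k-1})=N(f)$), but these are the same idea. You also deal with the degenerate case $w(f_i)=0$ in (iii), which the paper leaves implicit.

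For (iv) you genuinely diverge. The paper proves linear independence of the $\sum e_i$ primitive pairs $(g_i^{(k)},h_i^{(k)})=\bigl(\frac{f}{f_i^k}\partial_Xf_i,\frac{f}{f_i^k}\partial_Yf_i\bigr)$ by a purely polynomial argument: from a purported dependence $\sum\lambda_{i,k}\,g_i^{(k)}=0$ it multiplies through by $g_j^{(e_j)}$, kills all terms with $i\neq j$ modulo $f$, and reduces to showing $f_j^{e_j}$ divides $\lambda_{j,1}f_j^{e_j-1}+\cdots+\lambda_{j,e_j}$, which is impossible by degree unless all $\lambda_{j,k}=0$. It then deduces independence of the $(\mathcal{G}_i^{(k)},\mathcal{H}_i^{(k)})$ as a linear change of the $(g_i^{(k)},h_i^{(k)})$. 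Your route instead divides by $f$, reads the dependence as a vanishing rational $1$-form, and disentangles coefficients by inspecting principal parts along the pairwise coprime divisors $\{f_j=0\}$. Both work under the characteristic hypothesis (you need $\dd f_j\not\equiv 0\bmod f_j$, which $p>d$ supplies), but the paper's argument avoids any appeal to local expansions of rational differentials and stays at the level of commutative algebra with the $g_i^{(k)}$, so it is the more elementary of the two.

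Two small cautions. First, the paper's passage from independence of the $(g_i^{(k)},h_i^{(k)})$ to that of the $(\mathcal{G}_i^{(k)},\mathcal{H}_i^{(k)})$ silently requires the change-of-basis to have full rank, i.e.\ $d_{a_{\mathcal{E}},b_{\mathcal{E}}}(f_1)\neq 0$ in $\KK$; your proof faces exactly the same constraint (your simple-pole relations only yield $c_j^{(1)}w(f_1)=0$), and your device of ``re-indexing so that $w(f_1)\neq0$'' changes which combinations $\mathcal{G}_i^{(1)}$ you are proving independent, so it does not literally prove the statement as written. This is an implicit hypothesis in both treatments, not a gap you introduced. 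Second, your closing paragraph misattributes the hypothesis $p>d(d-1)$ in Theorem \ref{bound_sparse}: the paper invokes it to apply Gao's Lemma 2.4 (nonvanishing of the spectral polynomial, i.e.\ behaviour of $\Gc_d(-)$ on square-free polynomials in positive characteristic), not to control the size of the weights $w(f_i)$. Your numerical heuristic that $w(f_i)\leq d(d-1)$ for normals of edges of subpolytopes of $N((1+X+Y)^d)$ is a correct and useful observation, but it is not where the paper gets that bound.
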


\begin{proof} We begin with the proof of (i) and (ii).\\
By Ostrowski's formula, see\eqref{Ostrowski}, $N\Big(X\dfrac{f}{f_i^k}\partial_Xf_i\Big)=N\Big(\dfrac{f}{f_i^k}\Big)+N(X\partial_Xf_i)$, 
and since $N(X \partial_Xf_i) \subset N(f_i)$ we get
\begin{equation}\label{eq1}
	N\Big(X\dfrac{f}{f_i^k}\partial_Xf_i\Big) \subset N\Big(\dfrac{f}{f_i^k}\Big)+N(f_i) = N\Big(\dfrac{f}{f_i^{k-1}}\Big)\subset N^+(f).
\end{equation}

If $k=1$,   Equation \eqref{eq1}, shows that $N\Big(X\dfrac{f}{f_i}\partial_Xf_i\Big) \subset N(f)$ for all $i=1,\ldots,r$ and hence that $N(X\mathcal{G}_i^{(1)})\subset N(f)$ for all $i=2,\ldots,r$.

If $k>1$ then, by \eqref{Ostrowski} we have $$N(f)=N\Big(\dfrac{f}{f_i^{k-1}}\Big)+N(f_i^{k-1}).$$ 
So, if $f(0,0) \neq 0$ then $f_i^{k-1}(0,0) \neq 0$ and hence $(0,0) \in N(f_i^{k-1})$. It follows that $$N\Big(\dfrac{f}{f_i^{k-1}}\Big) \subset N(f),$$ that proves that $N\Big(X \mathcal{G}_i^{(k)}\Big) \subset N(f)$ for all $i=1,\dots r$ and all $k=2,\dots, e_i$.

We can proceed similarly with the polynomials $h_i^{(k)}$ and conclude this way  the proof of (i) and (ii).

We turn to the proof of (iii). If $k>1$, then by Definition \ref{good} we clearly have 
$$d_{a_{\mathcal{E}},b_{\mathcal{E}}}(a_{\mathcal{E}} X \mathcal{G}_i^{(k)} + b_{\mathcal{E}} Y\mathcal{H}_i^{(k)}) \leq d_{a_{\mathcal{E}},b_{\mathcal{E}}}(f)-1.$$ 
If $k=1$, denote by $f^{\mathrm{top}}$ the homogeneous part of $f$ with maximum weighted degree $d_{a_{\mathcal{E}},b_{\mathcal{E}}}(f)$. Then, Euler's relation  
$$a_{\mathcal{E}}X \partial_X f^{\mathrm{top}} +b_{\mathcal{E}} Y \partial_Y f^{\mathrm{top}}= d_{a_{\mathcal{E}},b_{\mathcal{E}}}(f)  f^{\mathrm{top}}$$
allows to conclude.

It remains to prove (iv). 
For all $i=1,\ldots,r$ and $k=1,\ldots,e_i$, set
	\begin{equation}\label{eq:figi}
		g_i^{(k)}=\dfrac{f}{f_i^k}\partial_Xf_i, \ \ h_i^{(k)}=\dfrac{f}{f_i^k}\partial_Yf_i.
	\end{equation}
We will prove that the $\sum_{i=1}^r e_i$ elements $(g_i^{(k)},h_i^{(k)})$ defined by \eqref{eq:figi} are $\KK$-linearly independent and then the desired result will follow directly. So suppose that there exists a collection of $\lambda_{i,k} \in \KK$ such that
	$$\sum_{i=1}^r \sum_{k=1}^{e_i} \lambda_{i,k} g_i^{(k)}=0, \ \ \sum_{i=1}^r \sum_{k=1}^{e_i} \lambda_{i,k} h_i^{(k)}=0$$
	and choose an integer $j\in \{1,\ldots,r\}$.
	We have 
	$$\sum_{i=1}^r \sum_{k=1}^{e_i} \lambda_{i,k} g_i^{(k)} g_{j}^{(e_j)}=0$$ and since
	$f$ divides $g_i^{(k)}g_j^{(e_j)}$ for all $i\neq j$, we deduce that 
	$$\sum_{k=1}^{e_j} \lambda_{j,k} g_{j}^{(k)}g_j^{(e_j)} = 0 \mod f.$$
	Equivalently, there exists a polynomial $T \in \KK[X,Y]$ such that
	$$fT= \lambda_{j,1} \dfrac{f}{f_j}\dfrac{f}{f_j^{e_j}} (\partial_X f_j)^2+\cdots+\lambda_{j,e_j} \dfrac{f}{f_j^{e_j}}\dfrac{f}{f_j^{e_j}}(\partial_X f_j)^2$$
	that is to say, such that 
	$$f_j^{e_j}T=(\partial_X f_j)^2\dfrac{f}{f_j^{e_j}}(\lambda_{j,1} f_j^{e_j-1}+\cdots+\lambda_{j,e_j}).$$
	Therefore, either $f_j$ divides $\partial_X f_j$ or either $f_j^{e_j}$ divides \mbox{$(\lambda_{j,1} f_j^{e_j-1}+\cdots+\lambda_{j,e_j})$}.

	A similar reasoning by replacing  $g_i^{(k)}$ with $h_i^{(k)}$ shows that 
	either $f_j$ divides $\partial_Y f_j$ or either $f_j^{e_j}$ divides $(\lambda_{j,1} f_j^{e_j-1}+\cdots+\lambda_{j,e_j})$. Now, since $\deg f_j \leq d<p$ and $f$ is not a constant polynomial, $(\partial_X f_j, \partial_Y f_j) \neq (0,0)$ and hence $f_j$ cannot divide one of its partial derivative. It follows that necessarily $f_j^{e_j}$ divides $\lambda_{j,1} f_j^{e_j-1}+\cdots+\lambda_{j,e_j}$.
	But since $\deg (f_j^{e_j}) > \deg ( \lambda_{j,1} f_j^{e_j-1}+\cdots+\lambda_{j,e_j})$ we must have $\lambda_{j,1} f_j^{e_j-1}+\cdots+\lambda_{j,e_j}=0$. Furthermore $\deg f_j^{e_j}>\deg f_j^{e_j-1}>\cdots>\deg f_j$, so that $\lambda_{j,k}=0$ for all $j,k$. This proves that the $(g_i^{(k)},h_i^{(k)})$, hence the $(\mathcal{G}_i^{(k)},\mathcal{H}_i^{(k)})$,  are linearly independent over $\KK$.
\end{proof}

Observe that the technical hypothesis $f(0,0)\neq 0$ in (ii) is necessary. Indeed, if $f=X^3(Y^2+X+1)$, $f_1=X$ and $e_1=3$ then 
$X\mathcal{G}_1^{e_1}=X(Y^2+X+1)$ and its Newton's polygon is not included in the Newton's polygon  of $f$. 

\medskip

\noindent \emph{Proof of Theorem \ref{bound_sparse}}. We will proceed similarly to what we did to prove Theorem \ref{bound}. Given a polynomial $h(X,Y) \in \KK[X,Y]$, we define the $\KK$-linear map
	\begin{eqnarray*}
	\SRc(h) : E_{\mathcal{N}}  & \longrightarrow & \KK[X,Y]\\
	(G,H)& \longmapsto & h^2\left(
	\partial_Y\left(\frac{G}{h}\right)-\partial_X\left(\frac{H}{h}\right)
	\right)	
	\end{eqnarray*}
	where 
	\begin{multline*}
	E_{\mathcal{N}}=\{(G,H) \in \KK[X,Y]\times \KK[X,Y] \textrm{ such that } N(XG)\subset \mathcal{N}, N(YH)\subset \mathcal{N}, \\
	d_{a_{\mathcal{E}},b_{\mathcal{E}}}(a_{\mathcal{E}} X G + b_{\mathcal{E}} Y H) \leq d_{a_{\mathcal{E}},b_{\mathcal{E}}}(h)-1\}.
		\end{multline*}
Notice that the last condition in the above definition has to be forgotten if there does not exist a good edge $\mathcal{E}$. Also, observe that all the elements introduced in Definition \ref{kerdef} belong to the kernel of $\SRc(f)$, keeping the notation of loc.~cit.

Since we assumed that $\mathcal{N} \subseteq N \left( (1+X+Y)^d \right)$, $E_{\mathcal{N}}$ is a subvector space of the $\KK$-vector space $\KK[X,Y]_{\leq d-1}\times \KK[X,Y]_{\leq d-1}$, so that $\SRc(h)$ is a restriction of the $\KK$-linear map $\Gc_d(h)$ introduced in Section \ref{ruppert}, to $E_{\mathcal{N}}$. 
Let us compute the dimension of this latter vector space. Pick $(G,H) \in E_{\mathcal{N}}$. We have $N(XG) \subset \mathcal{N}$, hence $XG$ has at most $\mathfrak{p}-\mathfrak{p}_x$ nonzero coefficients and so does $G$, because $XG$ and $G$ have the same number of nonzero coefficients. Similarly, we get that $H$ has $\mathfrak{p}-\mathfrak{p}_y$ nonzero coefficients.  The condition  
$$d_{a_{\mathcal{E}},b_{\mathcal{E}}}(a_{\mathcal{E}} XG + b_{\mathcal{E}} Y H) \leq d_{a_{\mathcal{E}},b_{\mathcal{E}}}(f)-1$$ 
means that the weighted homogeneous part of highest degree of $G$ and $H$ are related. That is to say, we can write the homogeneous part of weighted degree $d_{a_{\mathcal{E}},b_{\mathcal{E}}}(f)-1$ of $H$ in terms of the homogeneous part of weighted degree $d_{a_{\mathcal{E}},b_{\mathcal{E}}}(f)-1$ of $G$. Consequently, we obtain 
\begin{equation}\label{dimEN}
	\dim_\KK E_{\mathcal{N}} = 2\mathfrak{p} -\mathfrak{p}_x -\mathfrak{p}_y -\mathfrak{p}_{\mathcal{E}}.
\end{equation}

Now, for all $(\mu:\lambda) \in \PP^1_\KK$, consider the linear map 
$$\SRc(\mu f+\lambda g)=\mu \SRc(f) + \lambda \SRc(g)$$
and, choosing bases for $E_{\mathcal{N}}$ and $\KK[X,Y]_{\leq 2d-2}$, the corresponding matrix
$${\rm M}(\mu f+\lambda g)=\mu {\rm M}(f) + \lambda {\rm M}(g).$$
They form a pencil of matrices that has $\dim_\KK E_{\mathcal{N}}$ columns and more rows. Then, define the  polynomial $\Spect(U,V)\in \KK[U,V]$ as the greatest common divisor of all the minors of size $\dim_\KK E_{\mathcal{N}}$ of the matrix
$$U{\rm M}(f)+V{\rm M}(g).$$
It is a homogeneous polynomial of degree lower or equal to $\dim_\KK E_{\mathcal{N}}$.

\medskip 

The polynomial $\Spect(U,V)$ is nonzero for the  same reason as the one given in Theorem \ref{bound}, since the linear maps $\SRc(-)$ are restrictions of the linear maps $\Gc_d(-)$. The fact that this property remains valid if $p>d(d-1)$ is a consequence of \cite[Lemma 2.4]{Ga} where Gao studied the property of the linear map $\Gc_d(-)$ for square-free polynomials in positive characteristic.

\medskip

Now, let $(\mu:\lambda) \in \sigma(f,g)$. Then $\dim \ker {\rm M}(\mu f+\lambda g) >0$ and   $(\mu:\lambda)$ is root of $\Spect(U,V)$ of multiplicity, say $\eta$. Since $\eta \geq \dim \ker {\rm M}(\mu f+\lambda g)$, Proposition \ref{inclusion} gives some lower bounds for $\eta$ that allow to conclude the proof of this theorem as follows. 
\begin{itemize}
	\item If $N(f), N(g) \subset \mathcal{N}$, then Proposition \ref{inclusion},(i),(iii) and (iv) implies that $\eta \geq n(\mu:\lambda) -1$ if $\deg(\mu f+\lambda g)=d$ or $\eta + \kappa \geq n(\mu:\lambda) -1$ otherwise. Summing over all the elements in $\sigma(f,g)$ we deduce the bound \eqref{theq1}.
	\item If $N(f)^+, N(g)^+ \subset \mathcal{N}$, then Proposition \ref{inclusion},(i), (ii) - first part, (iii) and (iv) implies that $\eta \geq m(\mu:\lambda) -1$ if $\deg(\mu f+\lambda g)=d$ or \mbox{$\eta + \kappa \geq m(\mu:\lambda) -1$} otherwise. Summing over all the elements in $\sigma(f,g)$ we deduce the bound \eqref{theq2}.
	\item If  $N(f), N(g) \subset \mathcal{N}$ and $(-g(0,0):f(0,0)) \notin \sigma(f,g)$, then Proposition \ref{inclusion},(i), (ii) - second part, (iii) and (iv) implies that $\eta \geq m(\mu:\lambda) -1$ if $\deg(\mu f+\lambda g)=d$ or $\eta + \kappa \geq m(\mu:\lambda) -1$ otherwise. Summing over all the elements in $\sigma(f,g)$ we deduce the bound \eqref{theq3}.
\end{itemize}
Notice that we used the fact that the polynomial $\Spect(U,V)$ is of degree lower or equal to $\dim_\KK E_{\mathcal{N}}$.
\hfill $\Box$

\medskip

To finish, point out that we can not state a result similar to Theorem \ref{bound_sparse} in terms of the Newton's polygon of $f$ and $g \in \KK[X_1,\dots,X_n]$ following the above strategy because we are not able to preserve the sparsity of the polynomials through Bertini's Theorem.

\bibliographystyle{plainnat}

% \bibliographystyle{elsarticle-harv}
%\bibliography{<your-bib-database>}

%% Authors are advised to submit their bibtex database files. They are
%% requested to list a bibtex style file in the manuscript if they do
%% not want to use elsarticle-harv.bst.

%% References without bibTeX database:

%% \bibitem must have one of the following forms:
%%   \bibitem[Jones et al.(1990)]{key}...
%%   \bibitem[Jones et al.(1990)Jones, Baker, and Williams]{key}...
%%   \bibitem[Jones et al., 1990]{key}...
%%   \bibitem[\protect\citeauthoryear{Jones, Baker, and Williams}{Jones
%%       et al.}{1990}]{key}...
%%   \bibitem[\protect\citeauthoryear{Jones et al.}{1990}]{key}...
%%   \bibitem[\protect\astroncite{Jones et al.}{1990}]{key}...
%%   \bibitem[\protect\citename{Jones et al., }1990]{key}...
%%   \harvarditem[Jones et al.]{Jones, Baker, and Williams}{1990}{key}...
%%

% \bibitem[ ()]{}

% \end{thebibliography}

\end{document}